\title{Moduli spaces of rational graphically stable curves}
\author{Andy Fry}
\begin{document}

\begin{abstract}
% \abstract{%
We use a graph to define a new stability condition for algebraic moduli spaces of rational curves. 
We characterize when the tropical compactification of the moduli space agrees with the theory of geometric tropicalization. 
The characterization statement occurs only when the graph is complete multipartite.
% }
\end{abstract}

\maketitle

% \tableofcontents

\section{Introduction}
\label{chap:intro}
%%%%%%%%%%%%%%%%%%%%%%%%%%%%%%%%%%%%%%%%%%%%%%%%%%%%%%%%%%%%%%%%

A strong trend in modern algebraic geometry is the study of \emph{moduli (parameter) spaces}. 
Broadly, a moduli space parameterizes geometric objects. 
An important and well-studied moduli space is $\Mzeron$, the moduli space of smooth rational curves with $n$ marked points.
The space $\Mzeron$ is not compact, which is undesirable for algebraic geometers because of the many applications that require such a condition.
A `nice' compactification of $\Mzeron$ brings along with it a modular interpretation, that is, a compact space containing $\Mzeron$ as a dense open subset has a boundary (equal to the complement of $\Mzeron$) that parameterizes $n$-marked algebraic curves that may not be smooth. 
The most notable compactification, $\Mzeronbar$, is due to Deligne and Mumford \cite{deligne1969irreducibility}.
The boundary of their compactification is comprised of nodal curves with finite automorphism group called \emph{stable curves}. 
It is interesting to know what alternate compactifications exist and how the boundary combinatorics differs in each case.
Another important family of compactifications, $\Mzerowbar$, alters the original stability condition by assigning a weight to each marked point.
The moduli spaces of weighted stable curves were established by Hassett in the context of the log minimal model program \cite{hassett2003moduli}. 

Tropical mathematics offers tools to investigate the structure of the boundary of compact moduli spaces by relating complex algebraic varieties to piecewise linear objects.
A strength of tropical geometry is that it allows us to look at a linear skeleton of a potentially complicated variety, reducing algebro-geometric questions to those of combinatorics.
For instance, the tropical moduli space $\Mzerontrop$ is a cone complex which parameterizes leaf-labelled metric trees.
The combinatorial relation between algebraic moduli spaces and tropical moduli spaces is that the cones of $\Mzerontrop$ are in bijection with the boundary strata of $\Mzeronbar$.

Recently, we \cite{fry2019tropical} define a new family of stability conditions for tropical moduli spaces of rational marked curves determined by the combinatorics of a graph $\Gamma$, called graphic stability. 
The present document investigates how graphic stability is applied in the algebraic moduli spaces and how the algebraic and tropical moduli spaces relate to each other. 

Algebraically, we define a compactification of $\Mzeron$ using graphic stability called the \emph{moduli space of rational graphically stable pointed curves}, denoted $\MzeroGbar$.
Taking the interior, $\MzeroG$, to be smooth $\Gamma$-stable curves, these new moduli spaces have many characteristics that we would expect from a modular compactification of $\Mzeron$; namely their boundaries are divisors with simple normal crossings. 
We also construct an embedding of $\MzeroG$ into a torus using the Pl\"{u}cker embedding of the Grassmannian. 

For a smooth subvariety of a torus with a simple normal crossings compactification, the theory of geometric tropicalization relates the combinatorics of the boundary to a balanced fan in a real vector space.
Using this theory we show that the tropicalization of $\MzeroG$ is identified with a projection of the tropical moduli space $\Mzerontrop$.
% , and therefore the Bergman fan $\BGammaprime$. 
% \\

% \noindent\textbf{Proposition~\ref{prop:GeometricTropicalizationOfMzeroGBar}} Using the embedding in Lemma~\ref{lem:TorusEmbeddingOfMzeroG}\note{``It's a weird forward reference" ``hard to appreciate because of forward notation"}, the tropical variety $\textrm{trop}(\MzeroG)$ is equal to $\projg(\Mzerontrop)$.\\

This tropicalization doesn't necessarily line up with the tropical moduli space $\MzeroGtrop$.
The obstruction is a lack of injectivity in the tropicalization map. 
% (this is mimicked in the combinatorial case in Chapter \ref{chap:MSChapter}).
Specifically, the divisorial valuation map $\pi_\Gamma:\Delta(\partial\MzeroGbar)\rightarrow N_\RR$ may not be injective, this fact is highlighted in Equation~\eqref{eqn:SimpleCaseOfNonInjectivityOfPiGamma}.
The main result of this work is a classification result stating precisely when the tropical compactification of $\MzeroG$ agrees with the theory of geometric tropicalization for rational graphically stable curves.
\\

\noindent\textbf{Theorem~\ref{thm:MainMainTheorem}}
The cone complex $\MzeroGtrop$ is embedded as a balanced fan in a real vector space by $\pi_\Gamma$ if and only if $\Gamma$ is a complete multipartite graph.
For such $\Gamma$, there is a torus embedding 
$$\MzeroG\hookrightarrow T^{\binom{n}{2}-n-N}=T_\Gamma $$
whose tropicalization $\textrm{trop}(\MzeroG)$ has underlying cone complex $\MzeroGtrop$. 
Furthermore, the tropical compactification of $\MzeroG$ is $\MzeroGbar$, i.e, the closure of $\MzeroG$ in the toric variety $X(\MzeroGtrop)$ is $\MzeroGbar$.\\

% THIS IS THE EXACT SAME MOTIVATION PARAGRAPH THAT IS IN THE TROPICAL PAPER!!! THIS NEEDS TO BE ALTERED!!
% THIS IS THE EXACT SAME MOTIVATION PARAGRAPH THAT IS IN THE TROPICAL PAPER!!! THIS NEEDS TO BE ALTERED!!
% THIS IS THE EXACT SAME MOTIVATION PARAGRAPH THAT IS IN THE TROPICAL PAPER!!! THIS NEEDS TO BE ALTERED!!
The motivation for this paper comes from the theory of tropical compactifications, geometric tropicalization, and log geometry.
From work of Tevelev \cite{tevelev2007compactifications} and Gibney-Maclagan \cite{gibney2011equations} it has been shown that there is an embedding of $\M_{0,n}$ into the torus of a toric variety $X(\Sigma)$ where the tropicalization of $\M_{0,n}$ is a balanced fan $\Sigma\cong\Mzerontrop$. 
This embedding is special in the sense that the closure of $\M_{0,n}$ in $X(\Sigma)$ is $\overline{\M}_{0,n}$. 
Cavalieri et al. \cite{cavalieri2016moduli} show a similar embedding can be constructed for weighted moduli spaces when the weights are heavy/light.  
% \note{Algebraically, different stabilities give rise to birational models of the same moduli space, and you are investigating when such birational models are tropical compactifiactions.}
In \cite{ranganathan2017moduli}, Ranganathan-(Santos-Parker)-Wise describe radial alignments of genus 1 tropical curves and show how this extra data can be used for desingularization. 
The subdivision given by radial alignments has been studied before in \cite{ardila2006bergman} and \cite{feichtner2005matroid}, and we use a rephrasing in order to relate it to log geometry and the results of Ranganathan et al.\\
% THIS IS THE EXACT SAME MOTIVATION PARAGRAPH THAT IS IN THE TROPICAL PAPER!!! THIS NEEDS TO BE ALTERED!!
% THIS IS THE EXACT SAME MOTIVATION PARAGRAPH THAT IS IN THE TROPICAL PAPER!!! THIS NEEDS TO BE ALTERED!!
% THIS IS THE EXACT SAME MOTIVATION PARAGRAPH THAT IS IN THE TROPICAL PAPER!!! THIS NEEDS TO BE ALTERED!!

The paper is organized as follows. 
Chapter 2 discusses preliminary definitions in the algebraic (Section 2.1) and tropical (Section 2.2) settings which are necessary for this manuscript.
Section 2.3 describes the process of geometric tropicalization and briefly covers this process applied to $\Mzeron$.
% This chapter is separated into two sections: the first introduces the combinatorial theory while the second introduces algebraic and tropical moduli spaces.
% Section 2.1 begins by axiomatically defining a matroid and discussing common terminology.
% Given a matroid, we define the Bergman fan as a polyhedral cone complex that coincides with the order complex of the lattice of flats of the matroid.
% This cone complex is a balanced fan which lives in a real vector space.
% Finally, we restrict our attention to the cycle matroid and discuss relevant graph theory.
% In Section 2.2, we introduce the moduli space of smooth rational pointed curves. 
% We define the Deligne-Mumford compactification by introducing stable pointed curves.
% Next we introduce tropical moduli spaces independently from their algebraic counterpart as cone complexes parameterizing metric trees.

Chapter 3 is composed of original work.
Section 3.1 contains a proof that $\MzeroGbar$ is not only a modular compactification of $\Mzeron$, but indeed a simple normal crossings compactification of the locus of smooth $\Gamma$-stable curves, $\MzeroG$.
To invoke geometric tropicalization, we also need a torus embedding of $\MzeroG$.
Section 3.2 begins by identifying the interior of the moduli space with the quotient of an open set of the Grassmannian, thus creating the necessary torus embedding.
We notice that the divisorial valuation map, which furnishes the combinatorics of the boundary with a fan structure, does not in general have the desired underlying cone complex, $\MzeroGtrop$.
Indeed, we achieve this compatibility only when $\Gamma$ is complete multipartite.
After the main theorem, we conclude with an example where the graph is not complete multipartite. 
In this case, the toric variety does not have enough boundary strata to contain the modular compactification. \\

\noindent{\bf Acknowledgements.} The author would like to thank Vance Blankers and Renzo Cavalieri for many helpful conversations and comments on early drafts.
% The author is especially grateful to for his guidance, suggestions, and support.
% The paper was significantly improved by the comments of anonymous referees, to whom the author is grateful.

\section{Preliminaries}
\label{chap:Preliminaries}
%%%%%%%%%%%%%%%%%%%%%%%%%%%%%%%%%%%%%%%%%%%%%%%%%%%%%%%%%%%%%%%%

% \subsection{Moduli Spaces}

% This document focuses on moduli spaces of rational pointed curves. 
% For a more in-depth treatment of these moduli spaces and moduli space in a broader sense see \cite{kock2007invitation} and \cite{harris2006moduli}.
% This section introduces both the algebraic and tropical moduli space of rational curves and describes their structures.

\subsection{Algebraic Moduli Spaces}

The moduli space $\Mzeron$ parameterizes isomorphism classes of smooth, genus $0$ curves with $n$ marked points. 
A point of $\Mzeron$ is an isomorphism class of $n$ ordered, distinct marked points on $\PP^1$ which we denote $(p_1,\ldots,p_n)$.
Two points $(\PP^1,p_1,\ldots,p_n),(\PP^1,q_1,\ldots,q_n)\in\Mzeron$ are equal if there is $\Phi\in\textrm{Aut}(\PP^1)$ such that $\Phi(p_i)=(q_i)$, for all $i$.
Using cross ratios, we may assign any $n$-tuple $(p_1,\ldots,p_n)$ to $(0,1,\infty,\Phi_{CR}(p_4),$ $\ldots,\Phi_{CR}(p_n))$ where $\Phi_{CR}$ is the unique automorphism of $\PP^1$ sending $p_1,$ $p_2,$ and $p_3$ to $0$, $1$, and $\infty$.
The first two nontrivial cases occur when $n=3$ and $n=4$.
As varieties, $\M_{0,3}$ is a point, as we send $(p_1,p_2,p_3)$ to $(0,1,\infty)$ and $\M_{0,4}=\PP^1\setminus\{0,1,\infty\}$ because the fourth point is free to vary as long as it doesn't coincide with the other 3 markings.
In general, this shows that $\Mzeron$ is an $n-3$ dimensional space and
$$ \Mzeron = \overbrace{\M_{0,4}\times\cdots\times \M_{0,4}}^{n-3\textrm{ times}}\setminus\{\textrm{all diagonals}\}.$$

From the $n=4$ example, we can see that $\Mzeron$ is not compact in general.
The most notable compactification, $\Mzeronbar$, is due to Deligne and Mumford which allows nodal curves with finite automorphism group; such curves are called \emph{stable} curves \cite{deligne1969irreducibility}, \cite{knudsen1983projectivity}. 

\begin{defn}\label{def:AlgebraicCurveStability}
A rational marked curve $(C,p_1,\ldots,p_n)$ is \emph{stable} if
\begin{itemize}
    \item $C$ is a connected curve of arithmetic genus 0, whose only singularities are nodes
    
    \item $(p_1,\ldots,p_n)$ are distinct points of $C\setminus \textrm{Sing}(C)$
    
    \item The only automorphism of $C$ that preserves the marked points is the identity.
\end{itemize}
\end{defn}

Stable nodal curves arise as the limit of a family of a smooth curve where a number of points collide, e.g., $p_1\mapsto p_2$.
In Figure~\ref{fig:AlgCurveAndItsDualGraph}, we see an example of a nodal curve in $\overline{\M}_{0,4}$ where the marked points $p_3,$ and $p_4$ have collided.
This curve also arises if $p_1$ and $p_2$ collide.
The \emph{dual graph} or \emph{combinatorial type} of a stable curve in $\Mzeronbar$, is defined by assigning a vertex to each component, an edge to each node, and a half-edge to each marked point, as shown in Figure~\ref{fig:AlgCurveAndItsDualGraph}.
An alternative definition of stability can be posed in terms of dual graphs.

\begin{defn}\label{def:DualGraphStability}
A rational marked curve $(C,p_1,\ldots,p_n)$ is \emph{stable} if it's dual graph is a tree where each vertex has valence greater than 2.
\end{defn}

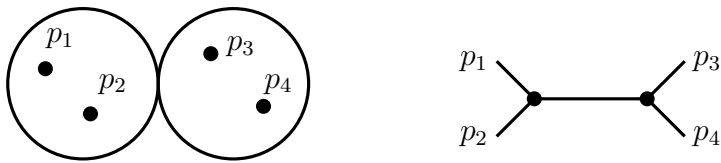
\begin{figure}[h]
\begin{tikzpicture}

%%%%%%%%%%%%%%%
% left hand side topological pic
%%%%%%%%%%%%%%%
% \draw[very thick] (0,0) ellipse (4cm and 1cm);

% \draw[very thick] (1,0) .. controls (1.5,-0.5) and (2.5,-0.5) .. (3,0);
% \draw[very thick] (1.2,-0.18) .. controls (1.5,0.3) and (2.5,0.3) .. (2.8,-0.18);

% \draw[very thick] (-1,0) .. controls (-1.5,-0.5) and (-2.5,-0.5) .. (-3,0);
% \draw[very thick] (-1.2,-0.18) .. controls (-1.5,0.3) and (-2.5,0.3) .. (-2.8,-0.18);

% \draw[very thick]  (0,0) circle (1);
% % \fill (2,1.5) circle (0.10);
% \fill (0,0) circle (0.10);
% \node at (0.3,0.2) {$5$};

\draw[very thick]  (1,.8) circle (1);
\fill (.7,1.2) circle (0.10);
\node at (1.1,1.3) {$p_3$};
\fill (1.4,.5) circle (0.10);
\node at (1.6,0.8) {$p_4$};
% \fill (.6,.4) circle (0.10);
% \node at (1,.2) {$p_5$};

\draw[very thick]  (-1,.8) circle (1);
\fill (-1.5,1) circle (0.10);
\node at (-1.3,1.4) {$p_1$};
\fill (-.9,.4) circle (0.10);
\node at (-.6,.8) {$p_2$};

%%%%%%%%%%%%%%%
% right side graph
%%%%%%%%%%%%%%%
\begin{scope}[shift={(5,0.6)}]
%first node
\fill (0,0) circle (0.10);
% \node at (0.2,-0.3) {\cred $v_1$};
\draw[very thick] (0,0) -- (-0.5,0.5); 
\node at (-0.8,0.5) {$p_1$};
% \draw[very thick] (.3,0) -- (-0.4,0); 
% \node at (-.7,0) {$t_2$};
\draw[very thick] (0,0) -- (-0.5,-0.5); 
\node at (-0.8,-0.5) {$p_2$};

%zeroth node
\fill (1.5,0) circle (0.10);
% \node at (1.5,-0.3) {\cred $v_0$};
\draw[very thick] (1.5,0) -- (2,0.5);
\node at (2.3,0.5) {$p_3$};
% \draw[very thick] (1.5,0) -- (2.1,0); 
% \node at (2.4,0) {$p_4$};
\draw[very thick] (1.5,0) -- (2,-0.5); 
\node at (2.3,-0.5) {$p_4$};

%edges
\draw[very thick] (0,0) -- (1.5,0);
% \node at (0.75,0.3) {\cb $\ell_1$};
% \draw[very thick] (1.5,0) -- (3,0);
% \node at (2.25,0.3) {\cb $\ell_2$};
\end{scope}

\end{tikzpicture}
\caption{A marked algebraic curve and it's dual graph}
\label{fig:AlgCurveAndItsDualGraph}
\end{figure} 

We define the \emph{boundary of $\Mzeronbar$} to be $\partial\Mzeronbar=\Mzeronbar\setminus\Mzeron$; it consists of all points corresponding to nodal stable curves.
We call the closure of a codimension one stratum a \emph{boundary divisor}. 
The boundary is stratified by nodal curves of a given topological type with an assignment of marks to each component.
In other words, $\partial\Mzeronbar$ is stratified by dual graphs of stable nodal pointed curves.
Dual graphs of boundary divisors partition the set of markings into two sets $I\sqcup I^c$.
We adopt the convention that the marking $1\in I^c$; therefore, a boundary divisor $D:=D_I$ is uniquely identified by its \emph{index set} $I$.

\subsection{Tropical Moduli Spaces}
We begin by introducing necessary background terminology on tropical moduli spaces.
For a more thorough survey of tropical moduli spaces, see \cite{maclagan2015introduction}.
Consider the space of genus 0, $n$-marked abstract tropical curves $\Mzerontrop$. 
Points of $\C\in\Mzerontrop$ are in bijection with metrized trees with \emph{bounded edges} having finite length and $n$ unbounded labeled edges called \emph{ends}. 
By forgetting the lengths of the bounded edges of $\C$ we get a tree with labeled ends called the \emph{combinatorial type} of $\C$. 
The space $\Mzerontrop$ naturally has the structure of a cone complex where curves of a fixed combinatorial type with $d$ bounded edges are parameterized by $\RR_{>0}^{d}$. 
We obtain $\Mzerontrop$ by gluing several copies of $\RR_{\geq0}^{n-3}$ via appropriate face morphisms, one for each trivalent combinatorial type. 

The space $\Mzerontrop$ may be embedded into a real vector space as a balanced, weighted, pure-dimensional polyhedral fan as in \cite{gathmann2009tropical}. 
We briefly recall this construction. 
A \emph{weighted fan} $(X,\omega)$ is a fan $X$ in $\RR^n$ where each top-dimensional cone $\sigma$ has a positive integer weight associated to it, denoted by $\omega(\sigma)$. 
A weighted fan is \emph{balanced} if for all cones $\tau$ of codimension one, the weighted sum of primitive normal vectors of the top-dimensional cones $\sigma_i\supset\tau$ is 0, i.e.,
$$\sum_{\sigma_i\supset\tau}\omega(\sigma_i)\cdot u_{\sigma_i/\tau}=0\in V/V_\tau$$
where $u_{\sigma_i/\tau}$ is the primitive normal vector, $V$ is the ambient real vector space, and $V_\tau$ is the smallest vector space containing the cone $\tau$. 
See \cite[Construction 2.3]{gathmann2009tropical} for a construction of the primitive normal vectors $u_{\sigma_i/\tau}$. 

For a curve $\C$, define $\dist(i,j)$ as the sum of lengths of all bounded edges between the ends marked by $i$ and $j$. Then the vector
\begin{align}
    d(\C)=(\dist(i,j))_{i<j}\in\RR^{\binom{n}{2}}/\Phi(\RR^n)=Q_n
    \label{eqn:TropicalDistanceCoords}
\end{align}
identifies $\C$ uniquely, where $\Phi:\RR^n\rightarrow\RR^{\binom{n}{2}}$ by $x\mapsto(x_i+x_j)_{i<j}$.\\

% {\cred
% \note{these equations are only mentioned in the next remark. delete?}
% The combinatorial type of an abstract $n$-marked tropical curve $\C$ with one bounded edge partitions the set of ends $[n]$ into $I\sqcup I^c$ where we adopt the convention that $1\in I^c$. 
% We denote the ray corresponding to $\C$ by $d(\C)=\rho_I=\rho_{I^c}$. In \cite{kerber2009intersecting}, Kerber and Markwig prove the relation 
% \begin{align}
%     \sum_{S\in V_1}\rho_S=0\in Q_n \label{eqn:SumOfDivisorRaysEqualsZero}
% \end{align} 
% where $V_1=\{I~|~1\not\in I, |I|=2\}$. They also show that for a subset $I\subset[n]\setminus\{1\}$ 
% \begin{align}
% \sum_{S\in\binom{I}{2}}\rho_S=\rho_I\in Q_n\label{eqn:AnyRayIsASumOfDivisorRays}
% \end{align}
% where $\binom{I}{2}$ is the set of all size-2 subsets of a set $I$. 

% \begin{rem}\label{rem:BasisOfRaysEqualsDivisorCombTypes}
% Equation~\eqref{eqn:SumOfDivisorRaysEqualsZero} tells us that any set of $\binom{n-1}{2}-1$ combinatorial types of curves with one bounded edge and a trivalent vertex not containing the end $1$ corresponds to a basis of $Q_n$. 
% Equation~\eqref{eqn:AnyRayIsASumOfDivisorRays} gives us the unique way to write any ray of $\Mzerontrop$ as a linear combination of our basis. 
% % \note{This discussion shows that we have a basis containing only trivalent divisors and gives a relation for writing any other curve in terms of the basis.}
% \note{this remark may end up getting deleted from this paper}
% \end{rem}

In \cite{fry2019tropical}, an alternate stability condition using a combinatorial graph is introduced for rational pointed tropical curves.

\begin{defn}\label{def:TropicalGammaStability}
The \emph{root vertex} of a stable tropical curve $\C$ is the vertex containing the end with marking 1.
A stable tropical curve $\C$ with $n$ ends is \emph{$\Gamma$-stable} if, at each non-root vertex $v$ of $\C$ with exactly one bounded edge, there exists an edge $e_{ij}\in E(\Gamma)$ where $i$ and $j$ are ends adjacent to $v$.
Define $\MzeroGtrop$ to be the parameter space of all rational $n$-marked $\Gamma$-stable abstract tropical curves. 
% Similarly, we define $\MzeroGtrad$ to be the parameter space of rational $n$-marked $\Gamma$-stable radially aligned abstract tropical curves.
\end{defn}

Using graphic stability, there exists a projection map on the vector space $Q_n=\RR^{\binom{n}{2}-n}$ that forgets the coordinates corresponding to the $N$ edges removed from $K_{n-1}$ to obtain $\Gamma$, see \cite[Equations 7, 8]{fry2019tropical}.
Although there are two projections defined, Lemma 3.18 of \cite{fry2019tropical} identifies them via a linear transformation, so we will use $\projg$ to refer to both projection maps.
We will see in Lemma~\ref{lem:ClassicalAndTropicalProjectionsMatch} that $\projg$ is the tropicalization of a regular map between algebraic tori.

\subsection{Geometric Tropicalization for $\Mzeron$}
Two theories, developed simultaneously, arise when dealing with tropicalizations of subvarieties of tori: \emph{tropical compactification} and \emph{geometric tropicalization}. 
The former, introduced by Tevelev \cite{tevelev2007compactifications}, describes a situation where the tropical variety determines a good choice of compactification. 
Specifically, the tropical compactification of $U\subset\TT^r$ is its closure $\overline{U}$ in a toric variety $X(\Sigma)$ with $|\Sigma|=\textrm{trop}(U)$.
% \note{``This section uses $\mathbb{T}$ for tori, but the rest of the paper doesn't"}
The latter, introduced by Hacking, Keel, and Tevelev \cite{hacking2009stable} and further developed by Cueto \cite{cueto2011implicitization}, explores the converse statement, how a nice compactification determines its tropicalization. 
\\

We recall some useful definitions for geometric tropicalization.
We note that geometric tropicalization can be completed with more relaxed conditions, such as replacing a smooth compactification with a normal, $\QQ$-factorial compactification and replacing simple normal crossing by combinatorial normal crossings.
For explicit details, see \cite{cueto2011implicitization}.

Let $U$ be a smooth subvariety of a torus $\TT^r$ and $Y$ be a smooth compactification containing $U$ as a dense open subvariety. 
The boundary of $Y$, $\partial Y=Y\setminus U$, is \emph{divisorial} if it is a union of codimension-1 subvarieties of $Y$.
We say $(Y,\partial Y)$ is a \emph{simple normal crossings (snc) pair} when the boundary of $Y$ behaves locally like an arrangement of coordinate hyperplanes.
In other words, $\partial Y$ is an \emph{snc divisor} if a non-empty intersection of $k$ irreducible boundary divisors is codimension $k$ and the intersection is transverse.
The \emph{boundary complex of $Y$}, $\Delta(\partial Y)$, is a simplicial complex whose vertices are in bijection with the irreducible divisors of the boundary divisor $\partial Y$, and whose $k$-cells correspond to a non-empty intersection of $k$ boundary divisors. The cells containing a face $\tau$ correspond to the boundary strata that lie in the closure of $\tau$'s stratum.

Let $\phi_1,\ldots,\phi_r\in \O^*(U)$. 
The $\phi_i$s define a morphism $\vec{\phi}$ from $U$ to a torus $\TT^r$, sending $u\in U$ to $(\phi_1(u),\ldots,\phi_r(u))$. 
When there are enough invertible functions, this map is an embedding. 
Given an irreducible boundary divisor $D\subset\partial Y$ we can compute the order of vanishing of each $\phi_i$ on $D$, $\textrm{ord}_D(\phi_i)$, yielding an $r$-dimensional integer vector $\vec{v}_D=(\textrm{ord}_D(\phi_1),\ldots,\textrm{ord}_D(\phi_r))$ living inside the cocharacter lattice of $\TT^r$, $N_{\TT^r}\subseteq N_\RR=\RR^r$.
Let $\pi:\Delta(\partial Y)\rightarrow N_\RR$ be the map defined by sending a vertex $v_i$ to $\vec{v}_{D_i}$ and extending linearly on every simplex. 
We call $\pi$ a \emph{divisorial valuation map}.
Geometric tropicalization says precisely that the support of the tropical fan is the cone over this complex and this result is independent of our choice of compactification $Y$, i.e., $\textrm{trop}(U)=\textrm{cone}(\textrm{Im}(\pi))$.
As we will see later, $\pi$ is not necessarily injective, so $\textrm{trop}(U)$ may not be the cone over $\Delta(\partial Y)$.\\
% Thus associating a fan $\Sigma_{\partial Y}$ in $\RR^k$ to the combinatorics of the boundary of $Y$. 
% This fan can also be obtained by taking the cone over the boundary complex of $\partial X$.

Tevelev~\cite[Theorem 5.5]{tevelev2007compactifications} first computes the tropicalization of $\Mzeron$ via geometric tropicalization by combining results of \cite{speyer2004tropical, kapranov1993chow}. 
This result is generalized by Gibney and Maclagan~\cite[Theorem 5.7]{gibney2011equations}. 
They use the fact that $\Mzeron$ can be embedded into a torus of dimension $\binom{n}{2}-n$ using the Pl{\"u}cker embedding of the Grassmannian $G(2,n)$ into $\PP^{\binom{n}{2}-1}$. 
For explicit details, see \cite{gibney2011equations} and \cite{maclagan2015introduction}. 
Comparing the algebraic Pl{\"u}cker embedding to the tropical distance coordinates we realize that the distance coordinates from Equation~\eqref{eqn:TropicalDistanceCoords} can be recovered from the tropicalization of the Pl{\"u}cker coordinates, for details see \cite[Section 3.1]{gross2016correspondence}.

\begin{exam}\label{exam:Mzero5TorusEmbeddingAndDivisorialValuation} 
For $\M_{0,5}$, we have an embedding into $T^{\binom{5}{2}-5}=T^5$.
% with coordinates $x_{24}/x_{23},$ $x_{25}/x_{23},~x_{34}/x_{23},~x_{35}/x_{23},$ and $x_{45}/x_{23}$.
In the boundary of $\overline{\M}_{0,5}$, there are 10 irreducible boundary divisors; they are labeled by their index sets in Figure~\ref{fig:BoundaryComplexMzeroFive}.
% The rays of $\textrm{trop}(\M_{0,5})$, i.e., the images of the divisors in $\RR^5$ via $\pi_5$ are:

% \noindent\begin{minipage}{.18\textwidth}
% % \begin{align*}
% %     \vec{v}_{D_{\{2,4\}}}&=(1,0,0,0,0)\\
% %     \vec{v}_{D_{\{2,5\}}}&=(0,1,0,0,0)\\
% %     \vec{v}_{D_{\{3,4\}}}&=(0,0,1,0,0)\\
% %     \vec{v}_{D_{\{3,5\}}}&=(0,0,0,1,0)\\
% %     \vec{v}_{D_{\{4,5\}}}&=(0,0,0,0,1)
% % \end{align*} 
% \begin{align*}
%     \vec{v}_{D_{\{2,4\}}}&=\vec{e}_1\\
%     \vec{v}_{D_{\{2,5\}}}&=\vec{e}_2\\
%     \vec{v}_{D_{\{3,4\}}}&=\vec{e}_3
% \end{align*} 
% \end{minipage}
% \begin{minipage}{.18\textwidth}
% \begin{align*}
%     \vec{v}_{D_{\{3,5\}}}&=\vec{e}_4\\
%     \vec{v}_{D_{\{4,5\}}}&=\vec{e}_5\\
%     \vec{v}_{D_{\{2,3\}}}&=-\vec{1}
% \end{align*}
% \end{minipage}
% \begin{minipage}{.3\textwidth}
% \begin{align*}
%     \vec{v}_{D_{\{2,3,4\}}}&=(0,-1,0,-1,-1)\\
%     \vec{v}_{D_{\{2,3,5\}}}&=(-1,0,-1,0,-1)
% \end{align*}
% \end{minipage}
% \begin{minipage}{.3\textwidth}
% \begin{align*}
%     \vec{v}_{D_{\{2,4,5\}}}&=(1,1,0,0,1)\\
%     \vec{v}_{D_{\{3,4,5\}}}&=(0,0,1,1,1)
% \end{align*}
% \end{minipage}

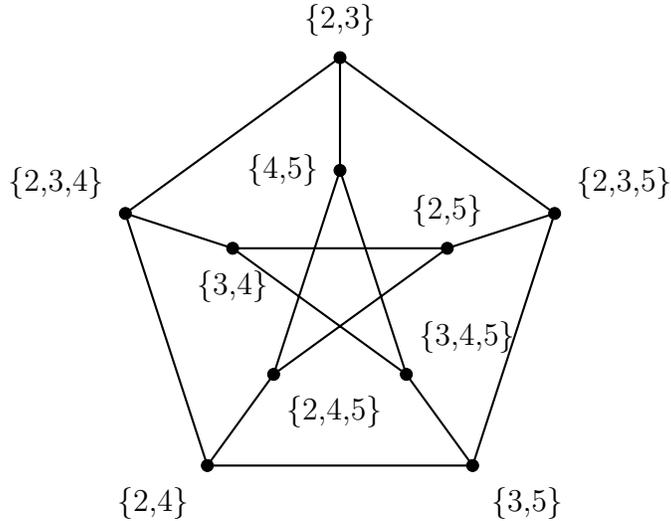
\begin{figure}[h]
% \subfloat[The boundary complex of $\overline{\M}_{0,5}$.]{
    \centering
\begin{tikzpicture}[scale=0.75]
    \draw[thick] (18:4cm) -- (90:4cm) -- (162:4cm) -- (234:4cm) --
(306:4cm) -- cycle;
    \draw[thick] (18:2cm) -- (162:2cm) -- (306:2cm) -- (90:2cm) --
(234:2cm) -- cycle;

    % \draw[fill=black] (90:3cm) circle (3pt) node[label=right:$F_{11}$] {};
    % \draw[fill=black] (90:0.618cm) circle (3pt) node[label=below:$F_{13}$] {};
    % \draw[fill=black] (270:3.236cm) circle (3pt) node[label=below:$F_{12}$] {};

    \foreach \x in {18,90,162,234,306}
{
    \draw[thick] (\x:2cm) -- (\x:4cm);
}
    \draw[fill=black] (18:2cm) circle (3pt) node[label=90:$\{\textrm{2,5}\}$] {};
    \draw[fill=black] (90:2cm) circle (3pt) node[label=180:$\{\textrm{4,5}\}$] {};
    \draw[fill=black] (162:2cm) circle (3pt) node[label=270:$\{\textrm{3,4}\}$] {};
    \draw[fill=black] (234:2cm) circle (3pt) node[label=280:$\{\textrm{2,4,5}\}$] {};
    \draw[fill=black] (306:2cm) circle (3pt) node[label=80:$\{\textrm{3,4,5}\}$] {};
    \draw[fill=black] (18:4cm) circle (3pt) node[label=18:$\{\textrm{2,3,5}\}$] {};
    \draw[fill=black] (90:4cm) circle (3pt) node[label=90:$\{\textrm{2,3}\}$] {};
    \draw[fill=black] (162:4cm) circle (3pt) node[label=162:$\{\textrm{2,3,4}\}$] {};
    \draw[fill=black] (234:4cm) circle (3pt) node[label=234:$\{\textrm{2,4}\}$] {};
    \draw[fill=black] (306:4cm) circle (3pt) node[label=306:$\{\textrm{3,5}\}$] {};
\end{tikzpicture}
    \caption{The boundary complex of $\overline{\M}_{0,5}$ with divisors (vertices) labeled by their index set.}
    \label{fig:BoundaryComplexMzeroFive}
% }
% \caption{}
\end{figure}

\end{exam}

We may define $\Mzerontrop$ alternatively as the cone over $\Delta(\partial\Mzeronbar)$.
Geometric tropicalization states precisely that $\textrm{cone}(\Delta(\partial\Mzeronbar))=\textrm{trop}(\Mzeron)$. 
The following theorem, due to Tevelev and Gibney-Maclagan, states that $\Mzerontrop=\textrm{trop}(\Mzeron)$. 

\begin{thm}[\cite{tevelev2007compactifications},\cite{gibney2011equations}]
The geometric tropicalization of $\Mzeronbar$ via the embedding
$$\Mzeron\hookrightarrow T^{\binom{n}{2}-n}$$ 
gives the fan $\textrm{trop}(\Mzeron)$ whose underlying cone complex is identified with $\Mzerontrop$. Furthermore, the tropical compactification of $\Mzeron$ in the toric variety $X(\Mzerontrop)$ is $\Mzeronbar$.
\end{thm}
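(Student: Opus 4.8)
\medskip

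\noindent\textbf{Proof strategy.}
The plan is to apply the geometric tropicalization theorem of \cite{hacking2009stable, cueto2011implicitization} to the pair $(\Mzeronbar,\partial\Mzeronbar)$ with the Pl\"{u}cker torus embedding, and then to use Tevelev's criterion \cite{tevelev2007compactifications} to recognise the closure. First I would verify the hypotheses: $\Mzeron$ sits as a subvariety of $T^{\binom{n}{2}-n}$ via the torus-invariant Laurent monomials in the Pl\"{u}cker coordinates $p_{ij}$ of $G(2,n)$; $\Mzeronbar$ is smooth and proper; and by Deligne--Mumford and Knudsen its boundary is a simple normal crossings divisor whose irreducible components are the $D_I$ (for $I\subset\{1,\dots,n\}$ with $1\in I^c$ and $2\le|I|\le n-2$), with a family $\{D_{I_1},\dots,D_{I_k}\}$ meeting transversely in codimension $k$ exactly when the $I_j$ are pairwise nested or disjoint. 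Then geometric tropicalization yields $\operatorname{trop}(\Mzeron)=\operatorname{cone}(\operatorname{Im}\pi)$ for the divisorial valuation map $\pi\colon\Delta(\partial\Mzeronbar)\to N_\RR\cong Q_n$.

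The heart of the argument is computing $\pi$ on vertices. I would compute $\operatorname{ord}_{D_I}$ of each invertible function defining the embedding, i.e.\ of the torus-invariant monomials $\prod p_{ij}^{a_{ij}}$, using that $p_{ij}$ extends to a section on $\Mzeronbar$ vanishing simply along $D_J$ exactly when $i$ and $j$ lie on opposite sides of $J\sqcup J^c$. Matching the resulting vector against the tropical distance coordinates of Equation~\eqref{eqn:TropicalDistanceCoords} (as in \cite[Section 3.1]{gross2016correspondence}) should identify $\vec{v}_{D_I}\in Q_n$ with $d(\C_I)$, the distance vector of the tropical curve carrying a single length-one bounded edge that separates the ends into $I$ and $I^c$; equivalently, $\pi$ sends the vertex $I$ of $\Delta(\partial\Mzeronbar)$ to the primitive generator of the corresponding ray of $\Mzerontrop$.

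Next I would show $\pi$ is injective and compatible with the polyhedral structure. Distinct two-block partitions give distinct rays $d(\C_I)$, and on a maximal simplex of $\Delta(\partial\Mzeronbar)$ --- which corresponds to a trivalent type with $n-3$ bounded edges whose lengths are independent coordinates --- the vectors $d(\C_I)$ are linearly independent, so $\pi$ maps each simplex isomorphically onto its image. Hence $\operatorname{cone}(\operatorname{Im}\pi)=\operatorname{cone}(\Delta(\partial\Mzeronbar))$, which by the alternate description recalled above is exactly $\Mzerontrop$; every maximal cone carries weight $1$, and balancing is automatic since tropicalizations of subvarieties of tori are balanced. This gives the first assertion.

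For the final claim, since $|\Mzerontrop|=\operatorname{trop}(\Mzeron)$, Tevelev's criterion shows that the closure $\overline{\Mzeron}$ in the toric variety $X(\Mzerontrop)$ is proper and is a tropical compactification. To identify it with $\Mzeronbar$, I would use smoothness of $\Mzeronbar$ and the fact that the rays of $\Mzerontrop$ are generated by the $\vec{v}_{D_I}$ to produce a proper morphism $\Mzeronbar\to\overline{\Mzeron}\subset X(\Mzerontrop)$ that is the identity on $\Mzeron$; the orbit--cone correspondence together with the transversality recorded in the first step shows that the preimage of each torus orbit is a single boundary stratum of $\Mzeronbar$ of the same codimension, so the morphism is birational and quasi-finite between normal varieties, hence an isomorphism by Zariski's main theorem. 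I expect the crux to be the valuation computation of the second paragraph --- establishing the exact dictionary between the order of vanishing of the invariant Pl\"{u}cker monomials along the $D_I$ and the tropical distance coordinates --- since once that translation is in place, the injectivity, cone-matching, and closure arguments are essentially formal.
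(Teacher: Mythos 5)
This theorem is imported by citation --- the paper gives no proof of its own --- so the comparison is against the standard argument of Tevelev and Gibney--Maclagan, and your overall architecture (verify the snc pair, compute the divisorial valuations of the Pl\"{u}cker functions along the $D_I$, match against $\Mzerontrop$, then invoke Tevelev's criterion and identify the closure) is exactly that argument. The structure is sound and you correctly isolate the valuation computation as the crux.

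However, that crux is stated backwards. You claim $x_{ij}$ vanishes simply along $D_J$ ``exactly when $i$ and $j$ lie on opposite sides of $J\sqcup J^c$.'' Since $x_{ij}=x_iy_j-x_jy_i$ vanishes precisely when the $i$th and $j$th marked points collide, and the general point of $D_I$ is obtained by letting the points of $I$ collide onto a bubbled-off component, the correct statement is $\textrm{ord}_{D_I}(x_{ij})=1$ exactly when $\{i,j\}\subseteq I$, i.e.\ when $i$ and $j$ lie on the \emph{same} side; this is what the paper's Equation~\eqref{eqn:GammaDivisorialValuationMap} records (specialized to $\Gamma=K_{n-1}$). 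The identification with the distance vector of Equation~\eqref{eqn:TropicalDistanceCoords} is then not immediate: writing $u_I=\sum_{\{i,j\}\subseteq I}\vec{e}_{ij}$ and $c_I=\sum_{i\in I,\,j\in I^c}\vec{e}_{ij}=d(\C_I)$, one has $\Phi(\chi_I)=2u_I+c_I$, so in $Q_n=\RR^{\binom{n}{2}}/\Phi(\RR^n)$ the valuation vector $u_I$ and the cut vector $c_I$ span the same line only up to a negative scalar; reconciling the two requires passing to the quotient and fixing the min/max sign convention, which your sketch elides. Neither issue is fatal --- the rays and the cone structure come out the same --- but as written the dictionary you call the heart of the proof does not hold on the nose. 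One further small caveat: your Zariski's-main-theorem step at the end needs normality of the closure $\overline{\Mzeron}$ in $X(\Mzerontrop)$, which is not free; Tevelev instead deduces the identification from Kapranov's Chow-quotient description of $\Mzeronbar$, and Gibney--Maclagan prove it scheme-theoretically by an equations argument.
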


It follows from the previous theorem that the divisorial valuation map is injective, and thus induces a bijective map of cone complexes from $\Mzerontrop$ to $\textrm{trop}(\Mzeron)$. 
This is an important fact that we will revisit when discussing $\Gamma$-stability. \\

\section{Tropicalizing Moduli Spaces of Rational Graphically Stable Curves}
\label{chap:PhDChapter}
%%%%%%%%%%%%%%%%%%%%%%%%%%%%%%%%%%%%%%%%%%%%%%%%%%%%%%%%%%%%%%%%

% \subsection{Geometric Tropicalization of $\MzeroG$}

We define algebraic moduli spaces parameterizing rational graphically stable curves, $\MzeroGbar$, and investigate its tropicalization. 
The central result classifies \emph{all} graphically stable moduli spaces in which the tropical compactification of $\MzeroG$ agrees with the theory of geometric tropicalization.
% \note{``Same problem as the "family" usage before. You are investigating the spaces, not the family of spaces"}

% In this chapter, we use geometric tropicalization to compute the tropicalization of certain moduli spaces.
% Throughout this process, we discover a combinatorial criterion that allows us to precisely state when $\MzeroGtrop=\textrm{trop}(\MzeroG)$, and thus when the tropical compactification of $\MzeroG$ in the toric variety $X(\MzeroGtrop)$ is $\MzeroGbar$.

\subsection{The Moduli Space of Rational Graphically Stable Curves}
In \cite{smyth2013towards}, Smyth gives a complete classification of all modular compactifications of $\Mzeron$ using combinatorial objects called extremal assignments (Theorem 1.9). 
In this section, we prove that $\MzeroGbar$ is a modular compactification of $\Mzeron$ by showing that $\Gamma$-stability, as in Definition~\ref{def:TropicalGammaStability}, is an extremal assignment over $\Mzeron$.
In addition, we show that the pair $(\MzeroGbar,\partial\MzeroGbar)$ is an snc pair. 
We begin with the definition of a $\Gamma$-stable curve and $\MzeroGbar$.

\begin{defn}\label{def:AlgebraicGammaStability}
Let $\Gamma$ be a simple connected graph on vertices, $2,\ldots,n$.
The \emph{root component} of a rational stable n-marked curve $(C,p_1,\ldots,p_n)$ is the component containing $p_1$. 
A rational stable $n$-marked curve $(C,p_1,\ldots,p_n)$ is \emph{$\Gamma$-stable} if, at each non-root component of $\C$ with exactly one node, there exists an edge $e_{ij}\in E(\Gamma)$ where $p_i$ and $p_j$ are points on the component.

Define $\MzeroGbar$ to be the parameter space of all rational $\Gamma$-stable $n$-marked curves with the interior $\MzeroG$ to be all smooth rational $\Gamma$-stable $n$-marked curves.
\end{defn}

% Example~1.11 of \cite{smyth2013towards} demonstrates a similar case for Hassett stability where all of the weights are $1/k$. 
Consider the assignment defined by 
\begin{align}
    \Z(C) = \{Z\subset C ~|~ |Z\cap Z^c|=1,~ p_1\not\in Z,~ \{e_{ij}\in E(\Gamma) | ~p_i,p_j\in Z\}=\emptyset \}\label{eqn:ExtremalAssignmentForGammaStability}
\end{align}
If we call a subcurve $Z\subset C$ satisfying $|Z\cap Z^c|=1$ a \emph{tail}, then the assignment $\Z$ is defined by picking out all tails of $(C,p_1,\ldots,p_n)$ not containing $p_1$ that have no edges in $\Gamma$ between vertices corresponding to the marked points on the tail. 
For the purposes of this document, we require $\Gamma$ to be a simple connected graph on $n-1$ vertices, in which case Equation~\eqref{eqn:ExtremalAssignmentForGammaStability} is an extremal assignment.
% The first axiom holds, since $\Gamma$ contains an edge.
% In genus $0$, the automorphism group of the dual graph is trivial so the second axiom holds.
% Indeed, the third axiom is satisfied as degenerating a curve does not change the stability of tails.

By definition, the only components contracted by $\Z$-stability are exactly those which are contracted by $\Gamma$-stability. Every tail is contracted to a point of singularity type $(0,1)$ which is a smooth point. Therefore, $\MzeroGbar=\overline{\M}_{0,n}(\Z)$ (as defined in \cite{smyth2013towards}) and so $\MzeroGbar$ is a modular compactification of $\Mzeron$.

% \begin{rem}\label{rem:ASingleEdgeInGammaForModularCompactification}
% To the author's understanding, $\Gamma$ only needs to contain a single edge to be an extremal assignment. However, many of the tropicalization statements would not hold so we use a connected graph.
% \end{rem}

\begin{lem}\label{lem:SimpleNormalCrossingsMzeroGBar}
The boundary $\partial\MzeroGbar=\MzeroGbar\setminus\MzeroG$ is a divisor with simple normal crossings.
\end{lem}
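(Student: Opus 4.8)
The plan is to reduce the statement to the known fact that $(\Mzeronbar, \partial\Mzeronbar)$ is an snc pair, using the modular interpretation of $\MzeroGbar$ via the extremal assignment $\Z$. First I would recall from Smyth's theory (as invoked just above) that there is a surjective reduction morphism $c_\Z \colon \Mzeronbar \to \MzeroGbar$ which is the identity on the interior $\Mzeron$ and contracts precisely the $\Z$-unstable tails. On the tropical side this corresponds to the projection $\projg$, and combinatorially it sends a stable dual tree to the $\Gamma$-stabilization obtained by contracting every non-root one-noded component whose marked points span no edge of $\Gamma$. The key observation is that this reduction does \emph{not} identify any two distinct boundary divisors of $\Mzeronbar$: a boundary divisor $D_I$ of $\Mzeronbar$ (with $1 \notin I$) is either contracted to higher codimension — exactly when $I$ induces no edge of $\Gamma$, so the corresponding tail is $\Z$-unstable — or it maps birationally onto a boundary divisor of $\MzeroGbar$, and these images are distinct for distinct surviving $I$. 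Hence the irreducible boundary divisors of $\MzeroGbar$ are indexed by the index sets $I$ with $1 \notin I$ that \emph{do} span at least one edge of $\Gamma$.

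Next I would analyze the local structure. Away from the contracted locus, $c_\Z$ is an isomorphism onto its image, so near any point of $\MzeroG$ coming from the snc locus of $\Mzeronbar$ the boundary is automatically snc. The only thing to check is transversality and the correct codimension of intersections of the surviving divisors. For this I would use the product decomposition of the deformation space: a stable curve with dual tree $T$ having edge set $E(T)$ has étale-local neighborhood in $\Mzeronbar$ of the form (smoothing parameters) $\prod_{e \in E(T)} \mathbb{A}^1$, with the boundary divisor $D_I$ cut out by the coordinate of the corresponding edge. Passing to $\MzeroGbar$ simply forgets the $\mathbb{A}^1$ factors indexed by $\Z$-unstable edges (those coordinates are "smoothed away" by the reduction), leaving a coordinate cross-section in the remaining $\mathbb{A}^1$ factors. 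Thus a nonempty intersection of $k$ surviving boundary divisors in $\MzeroGbar$ is still locally a coordinate subspace of codimension $k$, which is exactly the snc condition. Equivalently: $\MzeroGbar = \Mzeronbar(\Z)$ is a smooth variety (it is, as a modular compactification built from a smooth compactification by contracting divisors, and smoothness follows since each contracted tail meets its complement in a single smooth $(0,1)$ point), and its boundary is a union of smooth divisors meeting like coordinate hyperplanes.

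I would then spell out the combinatorial bookkeeping: the $k$ surviving divisors $D_{I_1}, \dots, D_{I_k}$ intersect nontrivially in $\MzeroGbar$ if and only if $I_1, \dots, I_k$ are pairwise nested or disjoint (the usual condition for compatibility of boundary divisors in $\Mzeronbar$) \emph{and} each $I_j$ spans an edge of $\Gamma$; the resulting stratum is $\Gamma$-stable, and its codimension is $k$. Since this matches the dimension of the corresponding stratum computed from the $\Gamma$-stable dual tree (each surviving node contributes one to the codimension), transversality holds and the intersection is of the expected codimension. I expect the main obstacle to be the careful verification that $\MzeroGbar$ is \emph{smooth} and that the reduction morphism really is locally the described coordinate projection — i.e., that contracting a $\Z$-unstable tail does not introduce a singularity of $\MzeroGbar$ nor cause two boundary divisors to become tangent; once smoothness and the local product description are in hand, the snc property is immediate. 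This is where I would lean most heavily on Smyth's classification and on the explicit deformation theory of nodal curves, rather than on any computation specific to $\Gamma$.
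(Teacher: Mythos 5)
Your conclusion is right, and the deformation-theoretic fact you mention almost in passing is exactly what the paper's (very short) proof rests on; but the main route you propose has a false step. You want to transfer the snc structure of $\partial\Mzeronbar$ through the reduction morphism $c_\Z\colon\Mzeronbar\to\MzeroGbar$ by asserting that, in the local model given by one smoothing parameter per node of the dual tree, passing to $\MzeroGbar$ ``forgets the factors indexed by $\Z$-unstable edges.'' That cannot be the local description of $c_\Z$: the reduction is the identity on the dense open set $\Mzeron$, hence birational, whereas a map that kills a smoothing coordinate on an entire neighborhood has positive-dimensional fibers generically. The fate of an unstable $D_I$ is also not quite what you say: if $|I|=2$ and $I$ spans no edge of $\Gamma$, then $D_I$ is not contracted to higher codimension at all --- it maps isomorphically onto the divisor $\{p_i=p_j\}$, which lies in the \emph{interior} $\MzeroG$ (a smooth $\PP^1$ with two coinciding markings is a smooth $\Gamma$-stable curve); if $|I|\ge 3$ spans no edge, $D_I$ is blown down onto a codimension-$(|I|-1)$ locus of colliding markings, again inside $\MzeroG$. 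In both cases the unstable divisors leave the boundary because their images become interior points, and near those images $c_\Z$ is a genuine birational contraction, not a product projection; so the snc property of $\partial\MzeroGbar$ cannot be read off from that of $\partial\Mzeronbar$ by ``dropping coordinates.'' You correctly flag this local description as the main thing to verify, and as stated it fails.

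The repair is to forget the morphism and argue directly on $\MzeroGbar$, which is what the paper does. A point of $\MzeroGbar$ lying on exactly $k$ boundary divisors is a $\Gamma$-stable curve with exactly $k$ nodes; its versal deformation space is smooth and contains one smoothing parameter $t_i$ per node (the node having local equation $xy=t_i$), and the boundary divisor corresponding to the $i$th node is cut out by $t_i=0$. Hence the boundary is locally a union of coordinate hyperplanes, which is the snc condition. The only input needed to run this on $\MzeroGbar$ rather than $\Mzeronbar$ is that the contracted tails are replaced by points of singularity type $(0,1)$, i.e.\ smooth points with colliding markings, so $\Gamma$-stable curves are still nodal curves of the same deformation-theoretic kind --- that is the observation you relegate to a parenthesis, and it should carry the whole proof rather than the analysis of $c_\Z$.
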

\begin{proof}
The boundary $\partial\MzeroGbar$ is divisorial, meaning it is a union of divisors of $\MzeroGbar$.
In addition, the boundary strata are parameterized by dual graphs. 
Each edge of a dual graph corresponds to a node in its associated complex curve, where locally each node is given by an equation $xy=t_i$ when $t_i=0$. 
Since a boundary stratum of codimension $k$ is the intersection of $k$ divisors, each divisor acts as a coordinate hyperplane $t_i=0$.
Therefore, $\partial\MzeroGbar$ behaves locally like an arrangement of coordinate hyperplanes.
\end{proof}

\subsection{Geometric Tropicalization for $\MzeroG$}

Fix $\Gamma$ to be a simple connected graph on vertices $2,\ldots,n$ containing the edge $e_{23}$.
The goal of this section is to walk through the process of geometric tropicalization for the case of $\Gamma$-stability and study the tropical compactification of $\MzeroG$. 
We begin by investigating the projection of the Pl{\"u}cker embedding of $\Mzeron$. 
Graphic stability defines a projection map that will give a torus embedding using the remaining Pl{\"u}cker coordinates. 
Next, we examine the divisorial valuation map from the boundary complex of $\MzeroGbar$ into the cocharacter lattice of the torus. 
Fixing $e_{23}\in E(\Gamma)$ prescribes a set of coordinates on the torus.
The tropicalization is a fan which coincides with the tropical moduli space $\MzeroGtrop$ if and only if $\Gamma$ is complete multipartite.\\

% In the following diagram, we summarize the torus embedding of $\Mzeron$ via the Pl{\"u}cker embedding and define a similar map for $\Gamma$ stability.
We may set up a torus embedding for $\MzeroG$ in the following way.
% Let $\textrm{Mat}^\Gamma(2,n)$ be the set of $2\times n$ matrices where the $ij^\textrm{th}$ minor is nonzero whenever $i=1$ or $e_{ij}\in E(\Gamma)$.
% Define $G^\Gamma(2,n)$ as the subspace of the Grassmannian $G(2,n)$ given by the matrices in $\textrm{Mat}^\Gamma(2,n)$.
% By $\Gamma$-stability, the minors which are allowed to be zero coincide with the pairs of points that are allowed to collide.\\
Recall that the Pl{\"u}cker embedding is given by sending a $2\times n$ matrix, representing a choice of basis for a subspace $V$, to its vector of $2\times 2$ minors called the Pl{\"u}cker coordinates.
Let $\textrm{Mat}^\Gamma(2,n)$ be the set of $2\times n$ matrices where the $ij^\textrm{th}$ minor, $x_{ij}$, is nonzero whenever $i=1$ or $e_{ij}\in E(\Gamma)$.
Let $G^\Gamma(2,n)$ be the open subspace of $G(2,n)$ given by $\textrm{Mat}^\Gamma(2,n)$; that is, the points of $G^\Gamma(2,n)$ are given by the subset of nonvanishing Pl{\"u}cker coordinates $x_{ij}$ whenever $i=1$ or $e_{ij}\in E(\Gamma)$.
Let $k$ be an algebraically closed field and consider the action of the $(n-1)$-dimensional torus $T^{n-1}=(k^*)^n/k^*$ on $\PP^{\binom{n}{2}-1}$ given by 
$$(t_1,\ldots,t_n)\cdot[x_{ij}]_{1\leq i<j\leq n}=[t_it_jx_{ij}]_{1\leq i<j\leq n}$$
The $T^{n-1}$ torus action amounts to a nonzero scaling of the columns of the $2\times n$ matrices in $G^\Gamma(2,n)$ modulo diagonal scaling and $T^{n-1}$ acts freely on $G^\Gamma(2,n)$.

An $n$-tuple of (potentially overlapping) points of $\PP^1$, $([x_1:y_1],\ldots,[x_n:y_n])$, may be encoded into a $2\times n$ matrix where each point is a column of the matrix. 
Thus, $([x_1:y_1],\ldots,[x_n:y_n])$ is sent to $(x_{12}:\cdots:x_{n-1n})\in\PP^{\binom{n}{2}-1}$ where $x_{ij}=x_iy_j-x_jy_i$.
The coordinates $x_{ij}$ are nonzero precisely when the $i$th and $j$th points are distinct; in this way, $\MzeroG$ is equal to the quotient $G^\Gamma(2,n)/T^{n-1}$.

\begin{defn}
Let $\Projg$ be the rational map from $\PP^{\binom{n}{2}-1}$ to $\PP^{\binom{n}{2}-1-N}$ dropping all the Pl{\"u}cker coordinates $x_{ij}$ for which $e_{ij}$ is not an edge of $\Gamma$, where $N$ is the number of edges removed from $K_{n-1}$ to obtain $\Gamma$. Precisely, $N=\binom{n-1}{2}-E(\Gamma)$.
\end{defn}

After applying $\Projg$, the images of the remaining Pl\"{u}cker coordinates are non-zero, meaning the image of $\textrm{Pl}\left(G^\Gamma(2,n)\right)$ via $\Projg$ lives inside a torus in $\PP^{\binom{n}{2}-1-N}$.
Quotienting by $T^{n-1}$, we get that $\MzeroG$ may be embedded into an $\left(\binom{n}{2}-n-N\right)$-dimensional torus inside $\PP^{\binom{n}{2}-1-N}$.
Recall that $\Mzeron$ lives inside an $(\binom{n}{2}-n)$-dimensional torus, $T^{\binom{n}{2}-n}\subset \PP^{\binom{n}{2}-1}$.
The projection map $\Projg$ is regular on $T^{\binom{n}{2}-n}$; in fact, the projection of $T^{\binom{n}{2}-n}$ via $\Projg$ is the torus containing $\MzeroG$. 
We prove this fact in Lemma~\ref{lem:TorusEmbeddingOfMzeroG}, and the diagram in Figure~\ref{fig:TorusEmbeddingMzeroGDiagram} summarizes the above conversation.

\begin{figure}[h]
    \centering
\begin{tikzcd}[row sep=1cm, column sep=0.85cm]
\textrm{Mat}^\Gamma(2,n) \arrow[r,] \arrow[d, two heads] & G^\Gamma(2,n) \arrow[r,"\textrm{Pl}", hook] \arrow[d, two heads] & \textrm{Pl}\left(G^\Gamma(2,n)\right) \arrow[r,symbol=\subset] \arrow[d, "\Projg"] & \PP^{\binom{n}{2}-1} \arrow[d, two heads, dashed, "\Projg"]\\
\MzeroG \arrow[r,"\cong"] & G^\Gamma(2,n)/T^{n-1} \arrow[r, hook] & T^{\binom{n}{2}-1-N}/T^{n-1} \arrow[r,symbol=\subset] & \PP^{\binom{n}{2}-1-N}
\end{tikzcd}

% \begin{tikzcd}[row sep=1cm, column sep=0.5cm]
% \Mzeron \arrow[d] \arrow[r, hook] & G(2,n) \arrow[r,"\textrm{Pl}", hook] & \PP^{\binom{n}{2}-1} \arrow[d, "\textrm{Pr}_\Gamma", dashed] & \arrow[l, hook'] T^{\binom{n}{2}-1} \arrow[d, "\textrm{Pr}_\Gamma"] \arrow[r] &  T^{\binom{n}{2}-1}/T^{n-1} \arrow[d, "\textrm{Pr}_\Gamma"] & \arrow[l, hook'] \textrm{Pl}(\Mzeron)   \\
% \MzeroG \arrow[ur, hook] &  & \PP^{\binom{n}{2}-1-N} & \arrow[l, hook'] T^{\binom{n}{2}-1-N} \arrow[r] &  T^{\binom{n}{2}-1-N}/T^{n-1} & \arrow[l, hook'] \textrm{Pr}_\Gamma(\textrm{Pl}(\MzeroG))
% \end{tikzcd}
    \caption{Torus embedding of $\MzeroG$ via Pl{\"u}cker map.}
    \label{fig:TorusEmbeddingMzeroGDiagram}
\end{figure}

\begin{lem}\label{lem:TorusEmbeddingOfMzeroG}
The open part $\MzeroG$ can be embedded into the torus $\Projg\left(T^{\binom{n}{2}-n}\right)=T^{\binom{n}{2}-n-N}$ using the Pl{\"u}cker coordinates.
\end{lem}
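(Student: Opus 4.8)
The plan is to chase the diagram in Figure~\ref{fig:TorusEmbeddingMzeroGDiagram} and verify that each arrow on the bottom row behaves as claimed. First I would record what we already have: $\Mzeron \cong G^0(2,n)/T^{n-1}$ sits inside the torus $T^{\binom{n}{2}-n} = T^{\binom{n}{2}-1}/T^{n-1} \subset \PP^{\binom{n}{2}-1}$ via the Pl\"ucker embedding, this being the classical Gibney--Maclagan picture recalled in Section~2.3. Restricting attention to $G^\Gamma(2,n)$, which is by definition the open locus where the Pl\"ucker coordinates $x_{ij}$ with $i=1$ or $e_{ij}\in E(\Gamma)$ are all nonzero, its image under $\textrm{Pl}$ lands inside the subtorus cut out by $\{x_{ij}\neq 0 : i = 1 \text{ or } e_{ij}\in E(\Gamma)\}$ of $\PP^{\binom{n}{2}-1}$; quotienting by the free $T^{n-1}$-action gives the embedding $\MzeroG \hookrightarrow T^{\binom{n}{2}-1-N}/T^{n-1}$ already displayed in the figure, where $N$ coordinates have been forgotten so the relevant subtorus of $\PP^{\binom{n}{2}-1}$ has dimension $\binom{n}{2}-1-N$.

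The substantive point is then that the composite $\MzeroG \hookrightarrow \PP^{\binom{n}{2}-1-N}$ obtained by following $\Projg$ has image in a single torus orbit, and that this orbit is exactly $\Projg(T^{\binom{n}{2}-n})$. For the first part I would observe that $\Projg$ simply deletes the $N$ Pl\"ucker coordinates indexed by non-edges of $\Gamma$, and by construction of $G^\Gamma(2,n)$ the surviving coordinates $x_{ij}$ (those with $i=1$ or $e_{ij}\in E(\Gamma)$) are precisely the ones that do not vanish on $\textrm{Pl}(G^\Gamma(2,n))$; hence $\Projg$ is a well-defined morphism there (no indeterminacy, since we never project to a point where all target coordinates vanish), and its image consists of points of $\PP^{\binom{n}{2}-1-N}$ all of whose homogeneous coordinates are nonzero, i.e.\ lies in the dense torus $T^{\binom{n}{2}-1-N} \subset \PP^{\binom{n}{2}-1-N}$. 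Passing to the $T^{n-1}$-quotient (the torus action descends along $\Projg$ because it only rescales coordinates that are kept or dropped uniformly), we get $\MzeroG \hookrightarrow T^{\binom{n}{2}-1-N}/T^{n-1} = T^{\binom{n}{2}-1-N-(n-1)} = T^{\binom{n}{2}-n-N}$.

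To see this target torus is $\Projg(T^{\binom{n}{2}-n})$ I would argue on characters. The big torus $T^{\binom{n}{2}-n}$ has character lattice generated by the ratios $x_{ij}/x_{kl}$ of Pl\"ucker coordinates modulo the $T^{n-1}$-relations; $\Projg$ corresponds to restricting to the sublattice spanned by those ratios involving only surviving coordinates, and since $e_{23}\in E(\Gamma)$ (and the markings $1,2,3$ behave as a fixed reference) this sublattice has finite index zero defect, i.e.\ the projected torus is literally $T^{\binom{n}{2}-1-N}/T^{n-1}$, of dimension $\binom{n}{2}-1-N-(n-1)=\binom{n}{2}-n-N$. Dimension bookkeeping, $\binom{n}{2}-1-N-(n-1) = \binom{n}{2}-n-N$, then confirms the stated rank. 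The main obstacle I anticipate is not any deep geometry but the careful surjectivity/well-definedness check that $\Projg$ restricted to $\textrm{Pl}(G^\Gamma(2,n))$ has no base points and that its image is exactly a torus rather than a proper subvariety of one — this is where the defining open condition on $G^\Gamma(2,n)$ (nonvanishing of exactly the surviving coordinates) and the hypothesis $e_{23}\in E(\Gamma)$ must be used, and where one should double-check that the $T^{n-1}$-action remains free after projection so that the quotient is again a torus of the expected dimension.
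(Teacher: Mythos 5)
Your argument follows the paper's proof essentially step for step: push $\MzeroG$ through the Pl\"ucker map, observe that the coordinates surviving $\Projg$ (those with $i=1$ or $e_{ij}\in E(\Gamma)$) are exactly the nonvanishing ones so the image lands in the dense torus of $\PP^{\binom{n}{2}-1-N}$, and then quotient by the residual $T^{n-1}$-action; your extra character-lattice and dimension bookkeeping just makes explicit what the paper records in the surrounding discussion and Figure~\ref{fig:TorusEmbeddingMzeroGDiagram}. The one point you (like the paper, whose proof only asserts that ``the projection map is injective onto its image'') leave implicit is why the composite remains injective after dropping the $N$ coordinates --- this ultimately comes from connectivity of $\Gamma$ together with the retained coordinates $x_{1j}$, which allow one to reconstruct the point configuration from the surviving Pl\"ucker data --- so it would strengthen your write-up to say a word about that rather than only about base points and the identification of the target torus.
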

\begin{proof}
Let $(\PP^1,(x_1:y_1),\ldots,(x_n:y_n))$ be a $\Gamma$-stable curve in $\MzeroG$. 
This marked curve corresponds to, up to equivalence, a point $(x_{12}:\cdots:x_{n-1n})\in\PP^{\binom{n}{2}-1}$ where $x_{ij}=x_iy_j-x_jy_i$.
Some coordinates may be zero; specifically, $x_{ij}$ is allowed to be zero when $e_{ij}\not\in E(\Gamma)$.
By definition, $\Projg(x_{12}:\cdots:x_{n-1n})=(x_{ij})_{i<j}\in\PP^{\binom{n}{2}-1-N}$ for $(i,j)=(1,j)$ where $2\leq j\leq n$ or $e_{ij}\in E(\Gamma)$.
Since each coordinate is now nonzero, $\Projg(x_{12}:\cdots:x_{n-1n})$ lies in the torus $T^{\binom{n}{2}-1-N}$.
It is not hard to see that the same $(n-1)$-dimensional torus that was quotiented out in the $\Mzeron$ case also acts on $T^{\binom{n}{2}-1-N}$.
Finally, the embedding map must be injective on $\MzeroG$ because the Pl\"{u}cker map is injective on $\MzeroG$ and the projection map is injective onto its image. 
\end{proof}

% \note{reword this sentence}Lemma~\ref{lem:SimpleNormalCrossingsMzeroGBar} and Lemma~\ref{lem:TorusEmbeddingOfMzeroG} give us a boundary which is a simple normal crossings divisor and a torus embedding of the interior. 
% This allows us to study the tropical compactification of $\MzeroG$ and geometric tropicalization of $\MzeroGbar$ \note{as a general note, Cueto, Maclagan, Sturmfels etc. never say "geo trop of.." they always speak of the theory of geo trop}. 
The boundary of $\MzeroGbar$ is divisorial in the same way that $\partial\Mzeronbar$ is divisorial, except that there are fewer irreducible divisors. 
The ratios $x_{ij}/x_{23}$, for $2\leq i<j\leq n$, $(i,j)\neq(2,3)$ and $e_{ij}\in E(\Gamma)$, are regular functions on $\MzeroGbar$ and act as a choice of coordinates on the torus $T^{\binom{n}{2}-n-N}$.

Define the divisorial valuation map $\pi_\Gamma:\Delta(\partial\MzeroGbar)\rightarrow N_\RR$ by assigning the vector $\vec{v}_{D_I}=(\textrm{ord}_{D_I}(x_{24}/x_{23}),\ldots,\textrm{ord}_{D_I}(x_{n-1n}/x_{23}))$ to a divisor $D_I$ where
$$
\textrm{ord}_{D_I}(x_{ij}/x_{23})=
\begin{cases} 
      1 & \{2,3\}\not\subset I,~ \{i,j\}\subset I, \textrm{ and } e_{ij}\in E(\Gamma); \\
      -1 & \{2,3\}\subset I,~ \{i,j\}\not\subset I, \textrm{ and } e_{ij}\in E(\Gamma); \\
      0 & \textrm{else}.
\end{cases}
$$
This means
\begin{align}\label{eqn:GammaDivisorialValuationMap}
\pi_\Gamma(D_I)=\vec{v}_{D_I}=
\begin{cases} 
     \displaystyle\sum_{i,j\in I}\vec{e}_{ij}  & \{2,3\}\not\subset I \textrm{ and } e_{ij}\in E(\Gamma);\\
     \displaystyle-\sum_{i,j\not\in I}\vec{e}_{ij}  & \{2,3\}\subset I \textrm{ and } e_{ij}\in E(\Gamma).
\end{cases}
\end{align}
The standard basis vectors of $T^{\binom{n}{2}-n-N}$ are given by $\vec{v}_{D_{\{i,j\}}}$, where $e_{ij}\in E(\Gamma)\setminus\{e_{23}\}$, and $\vec{v}_{D_{\{2,3\}}}=-\vec{1}$. 
% There is a similar relation to Equation~\eqref{eqn:AnyDivisorVectorIsASumOfDivisorVectors}: 
For a divisor $D_I$ with $|I|\geq3$,
\begin{align}\label{eqn:AnyGammaDivisorVectorIsASumOfGammaDivisorVectors}
    \vec{v}_{D_I}&=\sum_{\substack{\{i,j\}\subset I;\\ e_{ij}\in E(\Gamma)}} \vec{v}_{D_{\{i,j\}}}.
    % \vec{v}_{D_I}&=\vec{v}_{D_{I\setminus J}} + \sum_{i\in I;~ e_{ij}\in E(\Gamma)}\sum_{j\in J;~ e_{ij}\in E(\Gamma)}\vec{v}_{D_{\{i,j\}}} + \vec{v}_{D_J}.
\end{align}

\begin{lem}\label{lem:ClassicalAndTropicalProjectionsMatch}
The tropicalization of the map $\Projg$ agrees with the projection $\projg$ from \cite[Equations 7, 8]{fry2019tropical}. 
\end{lem}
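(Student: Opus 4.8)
The plan is to unravel both sides of the claimed equality as explicit linear maps on coordinate vector spaces and check they agree. On the algebraic side, $\Projg$ is the monomial (toric) map $\PP^{\binom{n}{2}-1}\dashrightarrow\PP^{\binom{n}{2}-1-N}$ that simply deletes the Pl\"ucker coordinates $x_{ij}$ with $e_{ij}\notin E(\Gamma)$; restricted to the torus $T^{\binom{n}{2}-n}$ containing $\Mzeron$ (here I work with the dehomogenized coordinates $x_{ij}/x_{23}$), it is a homomorphism of tori, so its tropicalization is exactly the induced linear map on cocharacter lattices, namely the coordinate projection $\RR^{\binom{n}{2}-n}\to\RR^{\binom{n}{2}-n-N}$ forgetting the $N$ coordinates indexed by the non-edges of $\Gamma$. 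I would make this precise by citing the standard fact (e.g. from Maclagan--Sturmfels, or as used in \cite{gibney2011equations}) that tropicalization is functorial for homomorphisms of tori and sends such a homomorphism to the corresponding lattice map.

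Next I would recall, via the comparison of Pl\"ucker coordinates with tropical distance coordinates already invoked before Example~\ref{exam:Mzero5TorusEmbeddingAndDivisorialValuation} (see \cite{gross2016correspondence,speyer2004tropical}), the identification of the cocharacter lattice of $T^{\binom{n}{2}-n}$ with $Q_n=\RR^{\binom{n}{2}}/\Phi(\RR^n)$ in the distance coordinates of Equation~\eqref{eqn:TropicalDistanceCoords}, under which $\textrm{trop}(x_{ij}/x_{23})$ becomes the function $\dist(i,j)$ (up to the fixed additive normalization that trivializes $\dist$ on the ends $2,3$). Under this identification the coordinate projection described above is, by definition, exactly the projection $\projg$ of \cite[Equations 7, 8]{fry2019tropical}, which forgets precisely the $\binom{n-1}{2}-E(\Gamma)=N$ distance coordinates $\dist(i,j)$ corresponding to the non-edges of $\Gamma$. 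Since \cite[Lemma 3.18]{fry2019tropical} identifies the two candidate projections of that paper up to a linear isomorphism, it does not matter which of them we match; I would just point to that lemma and be done.

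So the skeleton is: (1) identify $\Projg$ on the relevant torus as a torus homomorphism given by a monomial map; (2) apply functoriality of tropicalization to conclude $\textrm{trop}(\Projg)$ is the induced lattice map, a coordinate deletion; (3) transport through the Pl\"ucker-vs-distance-coordinate dictionary to see this lattice map is $\projg$. The only real subtlety, and the step I expect to need the most care, is bookkeeping with the two quotients and normalizations: the source torus is the quotient $T^{\binom{n}{2}-1}/T^{n-1}$ (resp.\ $T^{\binom{n}{2}-1-N}/T^{n-1}$ after projecting), and one must check that $\Projg$ descends to these quotients compatibly with the $T^{n-1}$-actions — which it does, since $\Projg$ is $T^{n-1}$-equivariant by the formula $(t_i t_j x_{ij})$ — and that choosing $x_{23}$ as the normalizing coordinate on both sides (legitimate because $e_{23}\in E(\Gamma)$, so $x_{23}$ survives the projection) makes the dehomogenized descriptions of source and target line up with the domain and codomain of $\projg$ exactly. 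Once the equivariance and the compatibility of normalizations are spelled out, the equality $\textrm{trop}(\Projg)=\projg$ is immediate from functoriality.
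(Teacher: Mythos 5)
Your proposal is correct and follows essentially the same route as the paper: both arguments identify the coordinates $x_{ij}/x_{23}$ on the torus with the corresponding basis elements on the tropical side and observe that $\Projg$ and $\projg$ each forget exactly the coordinates indexed by the non-edges of $\Gamma$. Your write-up is more explicit about functoriality of tropicalization for monomial maps and the $T^{n-1}$-quotient and $x_{23}$-normalization bookkeeping, which the paper's proof leaves implicit, but the underlying argument is the same.
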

\begin{proof}
A basis of $T^{\binom{n}{2}-n}$ is given by $x_{ij}/x_{23}$ for $2\leq i<j\leq n$, $(i,j)\neq(2,3)$. 
These coordinates are in bijection with divisors $D_{\{i,j\}}$.
The tropicalization of representatives of such divisors are basis elements of $\RR^{\binom{n}{2}-n}$.
Both projections $\Projg$ and $\projg$, forget coordinates that correspond to the edges deleted from $K_{n-1}$ to obtain $\Gamma$.
The discussion above confirms that the tropicalization of the basis elements of $T^{\binom{n}{2}-n-N}$ coincide with the basis elements of $\RR^{\binom{n}{2}-n-N}$.
\end{proof}

\begin{prop}\label{prop:GeometricTropicalizationOfMzeroGBar}
% The geometric tropicalization of $\MzeroGbar$ using the embedding in Lemma~\ref{lem:TorusEmbeddingOfMzeroG} is $\textrm{trop}(\MzeroG)=\projg(\Mzerontrop)=\BGammaprime$. 
Using the embedding in Lemma~\ref{lem:TorusEmbeddingOfMzeroG}, the tropical variety $\textrm{trop}(\MzeroG)$ is equal to $\projg(\Mzerontrop)$.
\end{prop}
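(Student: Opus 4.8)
The plan is to reduce this statement to the already-established identification of $\textrm{trop}(\Mzeron)$ with $\Mzerontrop$ (the Tevelev–Gibney–Maclagan theorem recalled above), together with functoriality of tropicalization under torus morphisms. First I would recall that by Lemma~\ref{lem:TorusEmbeddingOfMzeroG} the embedding $\MzeroG \hookrightarrow T^{\binom{n}{2}-n-N}$ factors through the embedding $\Mzeron \hookrightarrow T^{\binom{n}{2}-n}$ followed by the regular surjection of tori $\Projg \colon T^{\binom{n}{2}-n} \to T^{\binom{n}{2}-n-N}$; indeed $\MzeroG$ is an open subvariety of $\Mzeron$ and the diagram in Figure~\ref{fig:TorusEmbeddingMzeroGDiagram} commutes. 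Since $\MzeroG$ is dense in $\Mzeron$ (removing the curves whose combinatorial type is forbidden by $\Gamma$-stability from $\Mzeron$ does nothing, as $\Mzeron$ parameterizes only smooth curves), its Zariski closure in the torus equals that of $\Mzeron$, so $\textrm{trop}(\MzeroG) = \textrm{trop}(\Mzeron)$ as subsets of $\RR^{\binom{n}{2}-n}$ before projecting.

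Next I would invoke the standard fact that tropicalization commutes with morphisms of tori: if $f \colon T \to T'$ is a homomorphism of algebraic tori with induced linear map $\textrm{Trop}(f) \colon N_\RR \to N'_\RR$ on cocharacter spaces, then for any closed subvariety $U \subseteq T$ one has $\textrm{trop}(\overline{f(U)}) = \textrm{Trop}(f)(\textrm{trop}(U))$ (see e.g.\ Maclagan–Sturmfels). Applying this with $f = \Projg$ and $U = \Mzeron$ gives
\[
\textrm{trop}(\MzeroG) = \textrm{Trop}(\Projg)\bigl(\textrm{trop}(\Mzeron)\bigr).
\]
By the Tevelev–Gibney–Maclagan theorem, $\textrm{trop}(\Mzeron)$ has underlying cone complex $\Mzerontrop$, and by Lemma~\ref{lem:ClassicalAndTropicalProjectionsMatch} the induced map $\textrm{Trop}(\Projg)$ is exactly the projection $\projg$ from \cite{fry2019tropical}. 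Combining these, $\textrm{trop}(\MzeroG) = \projg(\Mzerontrop)$, which is the assertion.

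The step requiring the most care is the claim $\textrm{trop}(\MzeroG) = \textrm{trop}(\Mzeron)$, i.e.\ that passing to the open subvariety $\MzeroG$ does not shrink the tropicalization, and the verification that $\MzeroG$ is genuinely dense in $\Mzeron$ under these embeddings rather than merely a locally closed subscheme of its image — here one uses that $\MzeroG$ is an open subvariety of $\Mzeron$ by Definition~\ref{def:AlgebraicGammaStability} (the smooth $\Gamma$-stable curves are all smooth $n$-pointed rational curves, since the $\Gamma$-stability condition only constrains the reducible boundary strata). One should also confirm that $\Projg$ restricted to the torus $T^{\binom{n}{2}-n}$ is a genuine homomorphism of tori and not merely a rational map, which is precisely what is asserted in the paragraph preceding Lemma~\ref{lem:TorusEmbeddingOfMzeroG} and proved there. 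Granting these, the proof is a short concatenation of functoriality of tropicalization, the known computation of $\textrm{trop}(\Mzeron)$, and Lemma~\ref{lem:ClassicalAndTropicalProjectionsMatch}.
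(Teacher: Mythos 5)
Your route is genuinely different from the paper's. The paper applies geometric tropicalization directly to the snc pair $(\MzeroGbar,\partial\MzeroGbar)$: Lemmas~\ref{lem:SimpleNormalCrossingsMzeroGBar} and \ref{lem:TorusEmbeddingOfMzeroG} supply the hypotheses, Lemma~\ref{lem:ClassicalAndTropicalProjectionsMatch} identifies the rays with the $\projg$-images of the divisorial valuations, and Cueto's Theorem~2.5 is used to compute the weight of each top-dimensional cone as an intersection number (equal to $1$). You instead push forward the known fan $\textrm{trop}(\Mzeron)\cong\Mzerontrop$ along the monomial map $\Projg$ via the Sturmfels--Tevelev/Maclagan--Sturmfels functoriality statement $\textrm{trop}(\overline{f(U)})=\textrm{Trop}(f)(\textrm{trop}(U))$. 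That is a legitimate alternative and arguably shorter at the level of supports, but note that the paper's argument also pins down the weights on the top-dimensional cones, whereas your pushforward argument gives only the set-theoretic equality unless you additionally invoke the multiplicity formula for tropical pushforwards and check that the resulting weights are all $1$.

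There is one concrete error you should fix: the inclusion between $\MzeroG$ and $\Mzeron$ runs the other way. By Definition~\ref{def:AlgebraicGammaStability} and the identification $\MzeroG\cong G^\Gamma(2,n)/T^{n-1}$, the space $\MzeroG$ allows the Pl\"ucker coordinate $x_{ij}$ to vanish when $e_{ij}\notin E(\Gamma)$, i.e.\ it contains smooth configurations in which the marked points $p_i$ and $p_j$ coincide (these are exactly the stabilizations of contracted tails). So $\Mzeron$ is a dense open subvariety of $\MzeroG$, not the reverse, and in particular $\MzeroG$ does not sit inside $T^{\binom{n}{2}-n}$ at all, so the phrase ``$\textrm{trop}(\MzeroG)=\textrm{trop}(\Mzeron)$ before projecting'' does not parse. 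The repair is straightforward: since $\MzeroG$ is irreducible and contains $\Mzeron$ as a dense open subset, and the restriction of the torus embedding of $\MzeroG$ to $\Mzeron$ is $\Projg$ composed with the Pl\"ucker embedding (Figure~\ref{fig:TorusEmbeddingMzeroGDiagram}), the closures of $\Projg(\Mzeron)$ and of $\MzeroG$ in $T^{\binom{n}{2}-n-N}$ coincide, and the rest of your argument goes through unchanged.
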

\begin{proof}
Geometric tropicalization requires a simple normal crossings compactification and a torus embedding. 
These two conditions are satisfied by Lemma~\ref{lem:SimpleNormalCrossingsMzeroGBar} and Lemma~\ref{lem:TorusEmbeddingOfMzeroG}.
By Lemma~\ref{lem:ClassicalAndTropicalProjectionsMatch} the divisorial valuations of the boundary divisors yield the rays of this fan.
Theorem 2.5 from \cite{cueto2011implicitization} states that the weight of each top-dimensional cone $\sigma\subset\textrm{trop}(\MzeroG)$ is equal to the intersection number, with multiplicity, of the divisors corresponding to the rays of $\sigma$.
A non-empty intersection of $n-3$ codimension-one curves is a single point with multiplicity 1, coinciding with the weights on $\projg(\Mzerontrop)$.
\end{proof}

From \cite{fry2019tropical}, we know that $\MzeroGtrop=\projg(\Mzerontrop)$ if and only if $\Gamma$ is a complete multipartite graph. 
Tropically, this characterization comes from studying the injectivity of a restriction morphism on graphic matroids.
Algebraically, we study the injectivity of the divisorial valuation maps.
Unlike in the $\Mzeron$ case (discussed in Section 2.3), the map $\pi_\Gamma$ may not be injective. 
There is a similar relation to Equation~\eqref{eqn:AnyGammaDivisorVectorIsASumOfGammaDivisorVectors} for $\Mzeronbar$ that we may use to demonstrate the simplest case of non-injectivity: 
consider the divisor $D_{\{i,j,k\}}$ in $\Mzeronbar$ and it's image under the divisorial valuation map
\begin{align}
    \vec{v}_{D_{\{i,j,k\}}}=\vec{v}_{D_{\{i,j\}}}+\vec{v}_{D_{\{i,k\}}}+\vec{v}_{D_{\{j,k\}}}.\label{eqn:SimpleCaseOfNonInjectivityOfPiGamma}
\end{align}
If exactly two of the vectors on the right correspond to $\Gamma$-unstable divisors, then $\pi_\Gamma$ cannot be injective.
This case does not happen when $\Gamma$ is complete multipartite. 
% \note{2 sentences on partitions and complete multipartite?}
Example~\ref{exam:ObstructionExampleDivisorialValuation} demonstrates the failure of injectivity for $\pi_\Gamma$, while Example~\ref{exam:CompleteMultipartiteExampleDivisorialValuation} exhibits a case where $\pi_\Gamma$ is injective.

\begin{exam}\label{exam:ObstructionExampleDivisorialValuation}
Let $\Gammatilde$ be the subgraph of $K_4$ with edges $e_{35}$ and $e_{45}$ removed; see Figure~\ref{fig:GammaForExampleOfProjgFailingInjectivity}.
Then we have $\MzeroGtilde\hookrightarrow T^{\binom{5}{2}-5-2}=T^3$ with coordinates $x_{24}/x_{23},~x_{25}/x_{23},$ and $x_{34}/x_{23}$. 
In $\MzeroGtildebar$, there are 8 irreducible boundary divisors, labeled in Figure~\ref{fig:SliceOfMzeroGammaTropObstruction}. 
Comparing the cone complexes of $\MzeroGtildetrop$ and $\textrm{trop}(\MzeroGtilde)$, we can see that the cones associated to the boundary strata $D_{\{3,4\}}$, $D_{\{3,4,5\}}$, and $D_{\{3,4\}}\cap D_{\{3,4,5\}}$ in $\MzeroGtildetrop$ are all mapped to the ray given by $D_{\{3,4\}}$ in $\textrm{trop}(\MzeroGtilde)$.
Explicitly, $\pi_{\Gammatilde}:\Delta(\partial\MzeroGtildebar)\rightarrow \RR^{3}$ where the divisors have been mapped to the following primitive vectors:
\begin{align*}
    \vec{v}_{D_{\{2,4\}}}&=(1,0,0) & \vec{v}_{D_{\{2,5\}}}&=(0,1,0) &\vec{v}_{D_{\{3,4\}}}&=(0,0,1)&\\
    \vec{v}_{D_{\{2,3\}}}&=(-1,-1,-1) & \vec{v}_{D_{\{2,3,4\}}}&=(0,-1,0) &\vec{v}_{D_{\{2,3,5\}}}&=(-1,0,-1)&\\
    \vec{v}_{D_{\{2,4,5\}}}&=(1,1,0) & {\cred\vec{v}_{D_{\{3,4,5\}}}}&{\cred=(0,0,1)} & &&
\end{align*}

\begin{figure}[h]
    \centering
\begin{tikzpicture}[scale=2]
    \node (v2) at (1,1.732) [circle, draw = black, inner sep =  2pt, outer sep = 0.5pt, minimum size = 4mm, line width = 1pt] {$2$};
    \node (v3) at (2,0) [circle, draw = black, inner sep =  2pt, outer sep = 0.5pt, minimum size = 4mm, line width = 1pt] {$3$};
    \node (v4) at (0,0) [circle, draw = black, inner sep =  2pt, outer sep = 0.5pt, minimum size = 4mm, line width = 1pt] {$4$};
    \node (v5) at (1,0.577) [circle, draw = black, inner sep =  2pt, outer sep = 0.5pt, minimum size = 4mm, line width = 1pt] {$5$};
    
    \draw[line width = 1pt] (v2) -- node [midway, right] {$e_{23}$} (v3);
    \draw[line width = 1pt] (v2) -- node [midway, left] {$e_{24}$} (v4);
    \draw[line width = 1pt] (v2) -- node [midway, below left] {$e_{25}$} (v5);
    \draw[line width = 1pt] (v3) -- node [midway, below] {$e_{34}$} (v4);
    % \draw[line width = 1pt] (v3) -- node [midway, above] {$e_{35}$} (v5);
    % \draw[line width = 1pt] (v4) -- node [midway, right] {$e_{45}$} (v5);
\end{tikzpicture}
    \caption{The graph $\Gammatilde$ in Example~\ref{exam:ObstructionExampleDivisorialValuation}}
    \label{fig:GammaForExampleOfProjgFailingInjectivity}
\end{figure}

\begin{figure}[h]
% \begin{subfigure}{.48\textwidth}
\subfloat[A slice of the cone complex $\MzeroGtildetrop$ with rays labeled by their divisor index set.]{
    \centering
\begin{tikzpicture}[scale=0.58]
    \draw[thick] (18:4cm) -- (90:4cm) -- (162:4cm) -- (234:4cm);
    \draw[thick] (234:2cm) -- (18:2cm) -- (162:2cm);
    \draw[thick, color=red] (162:2cm) -- (306:2cm);

    % \draw[fill=black] (90:3cm) circle (3pt) node[label=right:$F_{11}$] {};
    % \draw[fill=black] (90:0.618cm) circle (3pt) node[label=below:$F_{13}$]  {};
    % \draw[fill=black] (270:3.236cm) circle (3pt) node[label=below:$F_{12}$] {};

    \foreach \x in {18,162,234}
{
    \draw[thick] (\x:2cm) -- (\x:4cm);
}
    \draw[fill=black] (18:2cm) circle (3pt) node[label=90:$\{\textrm{2,5}\}$] {};
    % \draw[fill=black] (90:2cm) circle (3pt) node[label=180:$F_{5}$] {};
    \draw[fill=black] (162:2cm) circle (3pt) node[label=270:$\{\textrm{3,4}\}$] {};
    \draw[fill=black] (234:2cm) circle (3pt) node[label=280:$\{\textrm{2,4,5}\}$] {};
    \draw[fill=red, color=red] (306:2cm) circle (3pt) node[label=30:$\{\textrm{3,4,5}\}$] {};
    \draw[fill=black] (18:4cm) circle (3pt) node[label=18:$\{\textrm{2,3,5}\}$] {};
    \draw[fill=black] (90:4cm) circle (3pt) node[label=90:$\{\textrm{2,3}\}$] {};
    \draw[fill=black] (162:4cm) circle (3pt) node[label=162:$\{\textrm{2,3,4}\}$] {};
    \draw[fill=black] (234:4cm) circle (3pt) node[label=234:$\{\textrm{2,4}\}$] {};
\end{tikzpicture}
    % \caption{A slice of the cone complex $\MzeroGtrop$ with rays labeled by their divisor index set.}
    \label{fig:SliceOfMzeroGammaTropObstruction}
}
% \end{subfigure}
\hfill
% \begin{subfigure}{.48\textwidth}
\subfloat[A slice of the cone complex $\textrm{trop}(\MzeroGtilde)$ with rays labeled by their divisor index set.]{
    \centering
\begin{tikzpicture}[scale=0.58]
    \draw[thick] (18:4cm) -- (90:4cm) -- (162:4cm) -- (234:4cm);
    \draw[thick] (234:2cm) -- (18:2cm) -- (162:2cm);

    % \draw[fill=black] (90:3cm) circle (3pt) node[label=right:$F_{11}$] {};
    % \draw[fill=black] (90:0.618cm) circle (3pt) node[label=below:$F_{13}$] {};
    % \draw[fill=black] (270:3.236cm) circle (3pt) node[label=below:$F_{12}$] {};

    \foreach \x in {18,162,234}
{
    \draw[thick] (\x:2cm) -- (\x:4cm);
}
    \draw[fill=black] (18:2cm) circle (3pt) node[label=90:$\{\textrm{2,5}\}$] {};
    % \draw[fill=black] (90:2cm) circle (3pt) node[label=180:$F_{5}$] {};
    \draw[fill=black] (162:2cm) circle (3pt) node[label=270:$\{\textrm{3,4}\}$] {};
    \draw[fill=black] (234:2cm) circle (3pt) node[label=280:$\{\textrm{2,4,5}\}$] {};
    % \draw[fill=black] (306:2cm) circle (3pt) node[label=30:$F_{9}$] {};
    \draw[fill=black] (18:4cm) circle (3pt) node[label=18:$\{\textrm{2,3,5}\}$] {};
    \draw[fill=black] (90:4cm) circle (3pt) node[label=90:$\{\textrm{2,3}\}$] {};
    \draw[fill=black] (162:4cm) circle (3pt) node[label=162:$\{\textrm{2,3,4}\}$] {};
    \draw[fill=black] (234:4cm) circle (3pt) node[label=234:$\{\textrm{2,4}\}$] {};
    % \draw[fill=black] (306:4cm) circle (3pt) node[label=306:$F_{4}$] {};
\end{tikzpicture}
    % \caption{A slice of the cone complex $\textrm{trop}(\MzeroG)$ with rays labeled by their divisor index set.}
    \label{fig:SliceOfTropOfMzeroGammaObstruction}
}
% \end{subfigure}
    \caption{}
\end{figure}
\end{exam}

\begin{exam}\label{exam:CompleteMultipartiteExampleDivisorialValuation}
Let $\Gamma=K_{2,2}$ be the complete bipartite graph obtained by removing edges $e_{25}$ and $e_{34}$ from $K_4$, as shown in Figure~\ref{fig:GraphOfK22}.
Then we have $\M_{0,K_{2,2}}\hookrightarrow T^{\binom{5}{2}-5-2}=T^3$ with coordinates $x_{24}/x_{23},~x_{35}/x_{23},$ and $x_{45}/x_{23}$. 
In $\overline{\M}_{0,K_{2,2}}$, there are 8 irreducible boundary divisors, labeled in Figure~\ref{fig:SilceOfTropOfMzeroGammaCompleteBipartite}. 
% Comparing the cone complexes of $\M^\textrm{trop}_{0,K_{2,2}}$ and $\textrm{trop}(\M_{0,K_{2,2}})$, we can see that the cones associated to the boundary strata $D_{\{3,4\}}$, $D_{\{3,4,5\}}$, and $D_{\{3,4\}}\cap D_{\{3,4,5\}}$ in $\M^\textrm{trop}_{0,K_{2,2}}$ are all mapped to the ray given by $D_{\{3,4\}}$ in $\textrm{trop}(\M_{0,K_{2,2}})$.
Explicitly, $\pi_\Gamma:\Delta(\partial\overline{\M}_{0,K_{2,2}})\rightarrow \RR^{3}$ where the divisors have been mapped to the following primitive vectors:
\begin{align*}
    \vec{v}_{D_{\{2,4\}}}&=(1,0,0) & \vec{v}_{D_{\{3,5\}}}&=(0,1,0) &\vec{v}_{D_{\{4,5\}}}&=(0,0,1)&\\
    \vec{v}_{D_{\{2,3\}}}&=(-1,-1,-1) & \vec{v}_{D_{\{2,3,4\}}}&=(0,-1,-1) &\vec{v}_{D_{\{2,3,5\}}}&=(-1,0,-1)&\\
    \vec{v}_{D_{\{2,4,5\}}}&=(1,0,1) & {\vec{v}_{D_{\{3,4,5\}}}}&{=(0,1,1)} & &&
\end{align*}

\begin{figure}[h]
    \centering
\subfloat[The graph $K_{2,2}$ in Example~\ref{exam:CompleteMultipartiteExampleDivisorialValuation}]{
\begin{tikzpicture}[scale=2]
    \node (v2) at (1,1.732) [circle, draw = black, inner sep =  2pt, outer sep = 0.5pt, minimum size = 4mm, line width = 1pt] {$2$};
    \node (v3) at (2,0) [circle, draw = black, inner sep =  2pt, outer sep = 0.5pt, minimum size = 4mm, line width = 1pt] {$3$};
    \node (v4) at (0,0) [circle, draw = black, inner sep =  2pt, outer sep = 0.5pt, minimum size = 4mm, line width = 1pt] {$4$};
    \node (v5) at (1,0.577) [circle, draw = black, inner sep =  2pt, outer sep = 0.5pt, minimum size = 4mm, line width = 1pt] {$5$};
    
    \draw[line width = 1pt] (v2) -- node [midway, right] {$e_{23}$} (v3);
    \draw[line width = 1pt] (v2) -- node [midway, left] {$e_{24}$} (v4);
    % \draw[line width = 1pt] (v2) -- node [midway, below left] {$e_{25}$} (v5);
    % \draw[line width = 1pt] (v3) -- node [midway, below] {$e_{34}$} (v4);
    \draw[line width = 1pt] (v3) -- node [midway, above] {$e_{35}$} (v5);
    \draw[line width = 1pt] (v4) -- node [midway, below right] {$e_{45}$} (v5);
\end{tikzpicture}
    % \caption{The graph $\Gamma$ in Example~\ref{exam:ObstructionOfFansOnK4Minus2AdjacentEdges}}
    \label{fig:GraphOfK22}
}
\hfil
\subfloat[A slice of the cone complex $\textrm{trop}(\M_{0,K_{2,2}})=\M^\textrm{trop}_{0,K_{2,2}}$ with rays labeled by their divisor index set.]{
% \subfloat[The boundary complex of $\overline{\M}_{0,5}$.]{
    \centering
\begin{tikzpicture}[scale=0.75]
    \draw[thick] (18:4cm) -- (90:4cm) -- (162:4cm) -- (234:4cm) --
(306:4cm) -- cycle;  %  Outer pentagon
    \draw[thick] (306:2cm) -- (90:2cm) -- (234:2cm);  %  inner star

    % \draw[fill=black] (90:3cm) circle (3pt) node[label=right:$F_{11}$] {};
    % \draw[fill=black] (90:0.618cm) circle (3pt) node[label=below:$F_{13}$] {};
    % \draw[fill=black] (270:3.236cm) circle (3pt) node[label=below:$F_{12}$] {};

    \foreach \x in {90,234,306}
{
    \draw[thick] (\x:2cm) -- (\x:4cm);
}
    % \draw[fill=black] (18:2cm) circle (3pt) node[label=90:$\{\textrm{2,5}\}$] {};
    \draw[fill=black] (90:2cm) circle (3pt) node[label=180:$\{\textrm{4,5}\}$] {};
    % \draw[fill=black] (162:2cm) circle (3pt) node[label=270:$\{\textrm{3,4}\}$] {};
    \draw[fill=black] (234:2cm) circle (3pt) node[label=280:$\{\textrm{2,4,5}\}$] {};
    \draw[fill=black] (306:2cm) circle (3pt) node[label=80:$\{\textrm{3,4,5}\}$] {};
    \draw[fill=black] (18:4cm) circle (3pt) node[label=18:$\{\textrm{2,3,5}\}$] {};
    \draw[fill=black] (90:4cm) circle (3pt) node[label=90:$\{\textrm{2,3}\}$] {};
    \draw[fill=black] (162:4cm) circle (3pt) node[label=162:$\{\textrm{2,3,4}\}$] {};
    \draw[fill=black] (234:4cm) circle (3pt) node[label=234:$\{\textrm{2,4}\}$] {};
    \draw[fill=black] (306:4cm) circle (3pt) node[label=306:$\{\textrm{3,5}\}$] {};
\end{tikzpicture}
    % \caption{Lattice of flats of $M(\Gamma)$}
    \label{fig:SilceOfTropOfMzeroGammaCompleteBipartite}
}
\caption{}
\end{figure}
\end{exam}

Lemma~\ref{lem:AlternateDescriptionsOfCompleteMultipartiteGraphs} gives useful characterizations of a complete multipartite graph also used in \cite{fry2019tropical}.

\begin{lem}\label{lem:AlternateDescriptionsOfCompleteMultipartiteGraphs}
Let $G$ be a graph. The following are equivalent:
\begin{enumerate}
    \item $G$ is a complete multipartite graph.
    \item If $e_{ij}$ is an edge of $G$, then for any vertex $v_k$, either $e_{ik}$ or $e_{jk}$ is an edge of $G$.
    \item There do not exist 3 vertices whose induced subgraph has exactly 1 edge. 
\end{enumerate}
\end{lem}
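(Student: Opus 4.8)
The plan is to establish the cycle of implications $(1)\Rightarrow(2)\Rightarrow(3)\Rightarrow(1)$, which is the most economical route since $(2)$ and $(3)$ are nearly contrapositives of one another and $(1)\Rightarrow(2)$ is essentially the definition unwound. First I would fix terminology: a \emph{complete multipartite} graph on vertex set $V$ is one for which there is a partition $V = V_1 \sqcup \cdots \sqcup V_r$ into nonempty \emph{parts} such that $e_{ij} \in E(G)$ if and only if $i$ and $j$ lie in different parts. Equivalently, non-adjacency is an equivalence relation on $V$ — this reformulation is the engine of the whole proof, so I would state it explicitly and note that ``non-adjacency is an equivalence relation'' is immediate from the partition description (the parts are exactly the equivalence classes), and conversely if non-adjacency (made reflexive) is an equivalence relation then its classes furnish the required partition.

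For $(1)\Rightarrow(2)$: suppose $e_{ij} \in E(G)$, so $i$ and $j$ lie in distinct parts, say $i \in V_a$ and $j \in V_b$ with $a \neq b$. Any vertex $v_k$ lies in some part $V_c$; since $c$ cannot equal both $a$ and $b$, at least one of $c \neq a$ or $c \neq b$ holds, giving $e_{ik} \in E(G)$ or $e_{jk} \in E(G)$. For $(2)\Rightarrow(3)$: I would argue by contraposition. Suppose three vertices $i,j,k$ induce a subgraph with exactly one edge; without loss of generality that edge is $e_{ij}$, so neither $e_{ik}$ nor $e_{jk}$ is an edge of $G$. This directly contradicts $(2)$ applied to the edge $e_{ij}$ and the vertex $v_k$. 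Hence $(2)$ forbids such a triple, which is exactly $(3)$.

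The main work — and the step I expect to be the real obstacle — is $(3)\Rightarrow(1)$. Here I would show that the relation ``$i \sim j$ iff $i = j$ or $e_{ij} \notin E(G)$'' is an equivalence relation; reflexivity and symmetry are trivial, so everything rests on transitivity. Suppose $i \sim j$ and $j \sim k$ with $i,j,k$ distinct (the degenerate cases where two of them coincide are immediate). Then $e_{ij} \notin E(G)$ and $e_{jk} \notin E(G)$. I must show $e_{ik} \notin E(G)$. If instead $e_{ik} \in E(G)$, then the induced subgraph on $\{i,j,k\}$ has exactly the single edge $e_{ik}$ and no others, contradicting $(3)$. So $e_{ik} \notin E(G)$, i.e.\ $i \sim k$, establishing transitivity. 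The equivalence classes of $\sim$ then partition $V$, and two vertices are adjacent precisely when they lie in different classes; taking these classes as the parts $V_1, \ldots, V_r$ exhibits $G$ as complete multipartite. (One should remark that the parts are nonempty by construction and that connectedness of $G$, when $r \geq 2$, is automatic; if $G$ is edgeless then $r = 1$ and $G$ is trivially ``complete $1$-partite,'' though in the applications $\Gamma$ is assumed connected so this edge case does not intervene.) This closes the cycle and proves the lemma.
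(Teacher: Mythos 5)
Your proof is correct, and it rests on the same underlying observation as the paper's (one-sentence) proof: your statement that ``non-adjacency is an equivalence relation'' is exactly the paper's remark that all three conditions say the complement of $G$ is a disjoint union of cliques, with the equivalence classes being those cliques. You simply carry out the cycle of implications $(1)\Rightarrow(2)\Rightarrow(3)\Rightarrow(1)$ in full detail, with the transitivity check in $(3)\Rightarrow(1)$ being the only nontrivial step.
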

\begin{proof}
We can see that all three conditions express that the complement of $G$ is a disjoint union of cliques.
\end{proof}

\begin{lem}\label{lem:DivisorialValuationInjective}
The divisorial valuation map $\pi_\Gamma$ is injective if and only if $\Gamma$ is complete multipartite.
\end{lem}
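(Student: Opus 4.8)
The plan is to prove the two implications separately: the direction ``$\pi_\Gamma$ injective $\Rightarrow$ $\Gamma$ complete multipartite'' by exhibiting an explicit pair of distinct boundary divisors with the same valuation vector, and the reverse direction by reducing to the classification of $\MzeroGtrop$ recalled from \cite{fry2019tropical}.

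For the first direction I would prove the contrapositive. Suppose $\Gamma$ is not complete multipartite, so by Lemma~\ref{lem:AlternateDescriptionsOfCompleteMultipartiteGraphs}(3) there are vertices $i,j,k$ whose induced subgraph in $\Gamma$ has exactly one edge, say $e_{ij}\in E(\Gamma)$ and $e_{ik},e_{jk}\notin E(\Gamma)$. (Such a triple forces $n\geq 5$; for $n\leq 4$ every connected graph on the vertices $2,\dots,n$ is already complete multipartite, so the implication is vacuous.) Because $e_{ij}$ is an edge of $\Gamma$ contained in both $\{i,j\}$ and $\{i,j,k\}$, the generic curves of the strata $D_{\{i,j\}}$ and $D_{\{i,j,k\}}$ are $\Gamma$-stable, so these are two distinct irreducible boundary divisors of $\MzeroGbar$, i.e.\ distinct vertices of $\Delta(\partial\MzeroGbar)$. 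Applying Equation~\eqref{eqn:AnyGammaDivisorVectorIsASumOfGammaDivisorVectors} with $I=\{i,j,k\}$, only the summand indexed by $e_{ij}$ survives, so $\pi_\Gamma(D_{\{i,j,k\}})=\vec{v}_{D_{\{i,j\}}}=\pi_\Gamma(D_{\{i,j\}})$. This is exactly the collision anticipated after Equation~\eqref{eqn:SimpleCaseOfNonInjectivityOfPiGamma}, where two of the three vectors on the right correspond to $\Gamma$-unstable divisors, and it shows $\pi_\Gamma$ is not injective.

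For the reverse direction, assume $\Gamma$ is complete multipartite. By geometric tropicalization (Proposition~\ref{prop:GeometricTropicalizationOfMzeroGBar}) the linear extension of $\pi_\Gamma$ maps the cone complex underlying $\Delta(\partial\MzeroGbar)$, which is $\MzeroGtrop$, \emph{onto} $\textrm{trop}(\MzeroG)=\projg(\Mzerontrop)$; and by Lemma~\ref{lem:ClassicalAndTropicalProjectionsMatch} this map is, under the identifications already in place, the restriction of the projection $\projg$. Consequently $\pi_\Gamma$ is injective if and only if it identifies $\MzeroGtrop$ with $\projg(\Mzerontrop)$ as cone complexes, which is precisely the statement ``$\MzeroGtrop=\projg(\Mzerontrop)$'' recalled from \cite{fry2019tropical} --- and that statement holds exactly when $\Gamma$ is complete multipartite. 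Together with the first direction this proves the lemma.

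I expect the main obstacle to be the bookkeeping in the reverse direction: one must be careful that $\MzeroGtrop$ really is the cone complex of $\Delta(\partial\MzeroGbar)$ and that $\pi_\Gamma$ is the map whose image computes $\textrm{trop}(\MzeroG)$, so that its injectivity is genuinely equivalent to the equality of cone complexes $\MzeroGtrop=\projg(\Mzerontrop)$. A fully self-contained argument would instead verify directly that, when $\Gamma$ is complete multipartite, the vectors $\vec{v}_{D_I}$ attached to the divisors of any single boundary stratum are linearly independent and that the images of distinct strata meet only along images of their common faces; the combinatorial heart of this is characterization (2) of Lemma~\ref{lem:AlternateDescriptionsOfCompleteMultipartiteGraphs}, controlling which edges of $\Gamma$ sit inside each index set $I$, and it coincides with the graphic-matroid injectivity already established in \cite{fry2019tropical}, so I would not reprove it.
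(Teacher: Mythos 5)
Your forward direction is essentially the paper's: pick a triple $i,j,k$ whose induced subgraph has the single edge $e_{ij}$ and observe that $\pi_\Gamma(D_{\{i,j,k\}})=\vec{v}_{D_{\{i,j\}}}=\pi_\Gamma(D_{\{i,j\}})$ while both index sets give genuine $\Gamma$-stable divisors. Your reverse direction, however, takes a genuinely different route. The paper argues directly with the valuation vectors: if $\vec{v}_{D_I}=\vec{v}_{D_J}$ then, after separating the cases $\{2,3\}\subset I$ or not via Equation~\eqref{eqn:GammaDivisorialValuationMap}, the induced subgraphs satisfy $E(\Gamma_I)=E(\Gamma_J)$; any $i\in I\setminus J$ would then be an isolated vertex of $\Gamma_I$, which is impossible since $\Gamma_I$ is complete multipartite and, by $\Gamma$-stability of $D_I$, contains at least one edge. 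You instead identify $\pi_\Gamma$ with the restriction of $\projg$ and import the classification $\MzeroGtrop=\projg(\Mzerontrop)$ iff $\Gamma$ is complete multipartite from \cite{fry2019tropical}. That reduction is sound, but only once you note (as you implicitly do via Proposition~\ref{prop:GeometricTropicalizationOfMzeroGBar}) that $\projg(\MzeroGtrop)=\projg(\Mzerontrop)$ holds unconditionally, so that injectivity of $\pi_\Gamma$ really is equivalent to the quoted equality of cone complexes; it also requires reading that equality in \cite{fry2019tropical} as an isomorphism induced by $\projg$ rather than a mere coincidence of images. What your route buys is brevity and a clean link to the tropical side; what it costs is self-containedness --- the combinatorial content is outsourced to the graphic-matroid argument in the reference, whereas the point of this section of the paper is to give an independent algebraic proof. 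Your closing observation that a fully rigorous statement should also check linear independence of the $\vec{v}_{D_I}$ on each simplex is well taken: both you and the paper otherwise verify injectivity only on vertices.
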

\begin{proof}
We begin by proving the forwards direction by contradiction. 
Suppose $\pi_\Gamma$ is injective and $\Gamma$ is not complete multipartite.
Using Lemma~\ref{lem:AlternateDescriptionsOfCompleteMultipartiteGraphs}, fix three vertices $v_i, v_j,$ and $v_k$ where $e_{ij}\in E(\Gamma)$ but $e_{ik},e_{jk}\not\in E(\Gamma)$. 
We have the following contradiction
$$
\pi_\Gamma(D_{\{i,j,k\}})=\vec{v}_{D_{\{i,j,k\}}}=\vec{v}_{D_{\{i,j\}}}=\pi_\Gamma(D_{\{i,j\}}).
$$

For the backwards direction, assume $\Gamma$ is complete multipartite. 
Let $D_I$ and $D_J$ be two $\Gamma$-stable divisors such that $\vec{v}_{D_I}=\vec{v}_{D_J}.$
By Equation~\eqref{eqn:GammaDivisorialValuationMap} $\{2,3\}\in I$ if and only if $\{2,3\}\in J$.
If $\{2,3\}\in I,J$, then we have
$$
-\vec{1} + \sum_{\substack{\{i,j\}\subset I, ~\{i,j\}\neq\{2,3\};\\ e_{ij}\in E(\Gamma)}}\vec{e}_{ij} = -\vec{1} + \sum_{\substack{\{i,j\}\subset J, ~\{i,j\}\neq\{2,3\};\\ e_{ij}\in E(\Gamma)}}\vec{e}_{ij}.
$$
If $\{2,3\}\not\in I,J$, then we have
$$
\sum_{\substack{\{i,j\}\subset I;\\ e_{ij}\in E(\Gamma)}}\vec{e}_{ij} = \sum_{\substack{\{i,j\}\subset J;\\ e_{ij}\in E(\Gamma)}}\vec{e}_{ij}.
$$
In either case, this implies that the induced subgraphs $\Gamma_I$ and $\Gamma_J$ of $\Gamma$ have the same edge sets, $E(\Gamma_I)=E(\Gamma_J)$.
If $I\neq J$, then there exists $i\in I\setminus J$. 
But $v_i\in\Gamma$ must be isolated in $\Gamma_I$, otherwise it would be contained in an edge in $E(\Gamma_I)$ and thus $i\in J$. 
However, $\Gamma_I$ is a complete multipartite graph, so it cannot have any isolated vertices. 
Therefore, $I=J$, concluding the proof.
\end{proof}

\begin{cor}\label{cor}
The cone complex $\MzeroGtrop$ is embedded as a fan in a real vector space by $\pi_\Gamma$ if and only if $\Gamma$ is complete multipartite.
\end{cor}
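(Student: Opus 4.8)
The plan is to obtain this corollary by combining three facts already in hand: the injectivity criterion of Lemma~\ref{lem:DivisorialValuationInjective}, the identification $\textrm{trop}(\MzeroG)=\projg(\Mzerontrop)$ of Proposition~\ref{prop:GeometricTropicalizationOfMzeroGBar}, and the tropical characterization from \cite{fry2019tropical} that $\MzeroGtrop=\projg(\Mzerontrop)$ precisely when $\Gamma$ is complete multipartite. The first observation to record is that the underlying cone complex of $\MzeroGtrop$ is the cone over the boundary complex $\Delta(\partial\MzeroGbar)$: cones of $\MzeroGtrop$ are combinatorial types of $\Gamma$-stable tropical curves, which are in bijection with boundary strata of $\MzeroGbar$, hence with faces of $\Delta(\partial\MzeroGbar)$. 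Thus $\pi_\Gamma$ is exactly the linear-on-cones map that must be examined to decide whether $\MzeroGtrop$ sits in $N_\RR$ as a fan, and ``embedded as a fan'' amounts to ``$\pi_\Gamma$ injective,'' because geometric tropicalization (via Lemma~\ref{lem:ClassicalAndTropicalProjectionsMatch} and Proposition~\ref{prop:GeometricTropicalizationOfMzeroGBar}) already guarantees that the images of the cones of $\MzeroGtrop$ glue to the balanced fan $\textrm{trop}(\MzeroG)$.

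For the forward direction I would simply invoke Lemma~\ref{lem:DivisorialValuationInjective}: an embedding of cone complexes is injective, so if $\pi_\Gamma$ embeds $\MzeroGtrop$ then $\pi_\Gamma$ is injective, and hence $\Gamma$ is complete multipartite. (The underlying obstruction is the one isolated in Equation~\eqref{eqn:SimpleCaseOfNonInjectivityOfPiGamma}: if $v_i,v_j,v_k$ induce the single edge $e_{ij}$ then $\pi_\Gamma(D_{\{i,j,k\}})=\pi_\Gamma(D_{\{i,j\}})$.)

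For the converse, assume $\Gamma$ is complete multipartite. Then $\pi_\Gamma$ is injective by Lemma~\ref{lem:DivisorialValuationInjective}, and since it is linear on each cone by construction it is an isomorphism of cone complexes onto its image. By Proposition~\ref{prop:GeometricTropicalizationOfMzeroGBar} that image is $\textrm{trop}(\MzeroG)=\projg(\Mzerontrop)$, which carries the balanced fan structure inherited from the ambient toric variety; and by \cite{fry2019tropical}, complete multipartiteness of $\Gamma$ is exactly what makes $\projg(\Mzerontrop)$ have underlying cone complex $\MzeroGtrop$. Hence $\pi_\Gamma$ realizes $\MzeroGtrop$ as a fan in $N_\RR$.

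The only point needing care --- and the step I would flag as the main obstacle --- is the bookkeeping in the first paragraph: one has to be explicit that ``$\MzeroGtrop$ embedded as a fan by $\pi_\Gamma$'' is genuinely equivalent to injectivity of $\pi_\Gamma$, which requires knowing that $\pi_\Gamma$ is a morphism of cone complexes, that the faces of $\Delta(\partial\MzeroGbar)$ index the cones of $\MzeroGtrop$, and that the cones already map onto a fan by geometric tropicalization. Once this equivalence is pinned down, the corollary is immediate, and it is precisely the ``fan'' portion of Theorem~\ref{thm:MainMainTheorem}.
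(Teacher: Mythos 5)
Your proposal is correct and follows essentially the same route as the paper: identify $\MzeroGtrop$ with $\textrm{cone}(\Delta(\partial\MzeroGbar))$, note that $\pi_\Gamma$ is a map of cone complexes onto $\textrm{trop}(\MzeroG)$, and reduce ``embedded as a fan'' to injectivity of $\pi_\Gamma$, which is Lemma~\ref{lem:DivisorialValuationInjective}. The paper's own proof is a one-line version of exactly this argument; your extra appeals to Proposition~\ref{prop:GeometricTropicalizationOfMzeroGBar} and the result of \cite{fry2019tropical} are consistent but not needed.
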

\begin{proof}
The map $\pi_\Gamma$ induces a map of cone complexes from $\MzeroGtrop = \textrm{cone}(\Delta(\partial\MzeroGbar))$ to $\textrm{trop}(\MzeroG) = \textrm{cone}(\textrm{Im}(\pi_\Gamma))$ which is an isomorphism if and only if $\Gamma$ is complete multipartite by Lemma~\ref{lem:DivisorialValuationInjective}.
\end{proof}

% \begin{center}
% \begin{tikzcd}[row sep=1cm, column sep=0.85cm]
% \Delta(\partial\Mzeronbar) \arrow[d] \arrow[r, "\pi_n"] & N_\RR \arrow[d]  \\
% \Delta(\partial\MzeroGbar) \arrow[r, "\pi_\Gamma"] & N_\RR
% \end{tikzcd}
% \end{center}

\begin{lem}\label{lem:ForgetfulMorphismsGenerateUnits}
The units of $\O^*(\MzeroG)$ are generated by cross ratios, i.e. forgetful morphisms to $\M_{0,4}$.
\end{lem}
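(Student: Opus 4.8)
The plan is to leverage the identification $\MzeroG \cong G^\Gamma(2,n)/T^{n-1}$ together with the classical computation of $\O^*(\Mzeron)$. First I would recall that the global units on $\Mzeron$ are known: $\O^*(\Mzeron)$ is generated (modulo constants) by the cross ratios $\lambda_{ijkl} = \tfrac{x_{ij}x_{kl}}{x_{ik}x_{jl}}$, which are exactly the pullbacks of the coordinate on $\M_{0,4}$ under the forgetful morphisms $\Mzeron \to \M_{0,4}$. This is a standard consequence of the fact that $\Mzeron$ embeds in the torus $T^{\binom{n}{2}-n}$ with $\Mzeronbar$ an snc compactification whose boundary divisors $D_I$ generate the class group, so a unit on $\Mzeron$ is a monomial $\prod_{i<j}(x_{ij}/x_{23})^{a_{ij}}$ whose associated divisor $\sum a_{ij}\vec v_{D_{\{i,j\}}}$, extended to all $D_I$ via Equation~\eqref{eqn:AnyGammaDivisorVectorIsASumOfGammaDivisorVectors}, vanishes; the kernel of $\pi$ restricted to the rays $D_{\{i,j\}}$ is precisely the span of the cross-ratio relations.

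Next I would compare $\O^*(\MzeroG)$ with $\O^*(\Mzeron)$. Since $\Mzeron \subset \MzeroG$ is a dense open subset whose complement in $\MzeroG$ is a union of boundary divisors (the $\Gamma$-stable $D_I$), any unit $f \in \O^*(\MzeroG)$ restricts to a unit on $\Mzeron$, hence is a product of cross ratios up to a constant; conversely a cross ratio $\lambda_{ijkl}$ lies in $\O^*(\MzeroG)$ if and only if it has neither a zero nor a pole along any $\Gamma$-stable boundary divisor. So the content of the lemma is twofold: (i) the cross ratios that are global units on $\MzeroG$ still generate $\O^*(\MzeroG)$, and (ii) enough of them remain regular and invertible. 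For (i), the point is that $\O^*(\MzeroG)/k^* \hookrightarrow \O^*(\Mzeron)/k^*$ is the subgroup of divisor classes supported away from $\partial\MzeroGbar$, and since the divisorial valuation vectors $\vec v_{D_{\{i,j\}}}$ for $e_{ij}\in E(\Gamma)$ together with $\vec v_{D_{\{2,3\}}}=-\vec 1$ form a basis of $N_\RR$ (as noted after Equation~\eqref{eqn:GammaDivisorialValuationMap}), a unit on $\MzeroG$ is determined by its orders of vanishing along the non-$\Gamma$-stable divisors $D_{\{i,j\}}$, $e_{ij}\notin E(\Gamma)$, which must all be zero — forcing $f$ to be a monomial in the remaining Plücker ratios, and one then rewrites such a monomial as a product of cross ratios using the relations.

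The main obstacle I expect is the bookkeeping in step (i): verifying that the subgroup of $\O^*(\Mzeron)/k^*$ consisting of functions with trivial order along every $D_{\{i,j\}}$ with $e_{ij}\notin E(\Gamma)$ is generated by cross ratios (rather than merely being generated by cross ratios and Plücker monomials). The clean way is to observe that the cross ratios $\lambda_{ij;2,3}:=\tfrac{x_{ij}x_{23}}{x_{i3}x_{2j}}$-type expressions span, modulo constants, the full unit group $\O^*(\Mzeron)/k^*$, and that imposing $\mathrm{ord}_{D_{\{i,j\}}} = 0$ for $e_{ij}\notin E(\Gamma)$ cuts out a sublattice still generated by cross ratios — this reduces to a linear-algebra statement about which integer combinations of the basis vectors $\vec e_{ij}$ annihilate the prescribed coordinates, and every such combination corresponds to a $\mathbb{Z}$-linear relation among the $D_{\{i,j\}}$, i.e. to a cross ratio. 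Finally I would note that these surviving cross ratios are, by construction, pullbacks of the coordinate on $\M_{0,4}$ along forgetful maps $\MzeroG \to \M_{0,4}$ (the forgetful maps are defined whenever the relevant four markings span an edge configuration compatible with $\Gamma$-stability), completing the identification.
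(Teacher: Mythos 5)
Your core argument coincides with the paper's: first establish that $\O^*(\Mzeron)$ is generated by cross ratios, then observe that a unit on $\MzeroG$ restricts to a unit on the dense open subset $\Mzeron$, hence agrees with a product of cross ratios there and therefore everywhere. The only real difference is how you justify the first step: the paper views $\Mzeron$ as the complement of a hyperplane arrangement and writes each defining linear form explicitly as a cross ratio, while you invoke the torus embedding and a divisor-class computation; both are standard and either suffices. One correction to your second paragraph, though: the complement of $\Mzeron$ in $\MzeroG$ is \emph{not} ``a union of the $\Gamma$-stable boundary divisors $D_I$'' --- those are the boundary divisors of $\MzeroGbar$, parameterizing nodal curves, and they lie entirely outside $\MzeroG$. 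The complement $\MzeroG\setminus\Mzeron$ consists of the coincidence loci $\{x_{ij}=0\}$ for $e_{ij}\notin E(\Gamma)$, i.e.\ smooth curves with colliding marked points. This slip does not damage the restriction-plus-density argument, which never uses the structure of the complement, but it does muddle your later discussion of which cross ratios remain invertible on $\MzeroG$: the relevant condition is the order of vanishing along these coincidence divisors (as you in fact state correctly a few lines later), not along the $\Gamma$-stable $D_I$. That finer analysis is in any case more than the lemma asserts --- and more than the paper proves --- since the statement only requires that every unit be a product of cross ratios.
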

\begin{proof}
The space $\Mzeron$ can be viewed as the subset of $(\CC^*\setminus\{1\})^{n-3}$ minus the hyperplanes $x_i-x_j=0$. The functions which don't vanish on $\Mzeron$ are rational functions that have zeros and poles on the hyperplanes, i.e. monomials in $x_i$, $x_i-1$, and $x_i-x_j$. We can write any monomial function as a product of cross ratios:
$$\frac{(P_1-P_2)(P_3-P_4)}{(P_1-P_3)(P_2-P_4)}.$$
For $x_i$, let $P_1=x_i$, $P_2=0$, $P_3=\infty$, and $P_4=1$.\\
For $x_i-1$, let $P_1=x_i$, $P_2=1$, $P_3=\infty$, and $P_4=0$.\\
For $x_i-x_j$, take a product of $x_i$ and $P_1=x_i$, $P_2=x_j$, $P_3=0$, and $P_4=\infty$.\\
Consider the embedding of $\Mzeron$ into $\MzeroG$ in the diagram below where $\phi$ is a unit of $\O^*(\MzeroG)$.
\begin{center}
    \begin{tikzcd}[row sep=1cm, column sep=1cm]
\Mzeron \arrow[r,hook] \drar["\tilde{\phi}"']& \MzeroG \arrow[d, "\phi"] \\
& \CC^* 
\end{tikzcd}
\end{center}
From arguments above, $\tilde{\phi}$ must be a product of cross ratios. Indeed, $\phi$ is also a product of cross ratios because $\Mzeron$ is dense in $\MzeroG$. 
\end{proof}

\begin{thm}\label{thm:MainMainTheorem}
The cone complex $\MzeroGtrop$ is embedded as a balanced fan in a real vector space by $\pi_\Gamma$ if and only if $\Gamma$ is a complete multipartite graph.
For such $\Gamma$, there is a torus embedding 
$$\MzeroG\hookrightarrow T^{\binom{n}{2}-n-N}=T_\Gamma $$
whose tropicalization $\textrm{trop}(\MzeroG)$ has underlying cone complex $\MzeroGtrop$. 
Furthermore, the tropical compactification of $\MzeroG$ is $\MzeroGbar$, i.e, the closure of $\MzeroG$ in the toric variety $X(\MzeroGtrop)$ is $\MzeroGbar$.
\end{thm}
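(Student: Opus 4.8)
The proof assembles the pieces proved above, so I would dispatch the first two assertions quickly. For the ``only if'' direction of the equivalence I argue contrapositively: if $\Gamma$ is not complete multipartite then, by Lemma~\ref{lem:AlternateDescriptionsOfCompleteMultipartiteGraphs} and Lemma~\ref{lem:DivisorialValuationInjective}, the map $\pi_\Gamma$ is not injective, so it cannot embed $\MzeroGtrop$ as a fan at all. For the ``if'' direction, assume $\Gamma$ complete multipartite; Corollary~\ref{cor} already gives that $\pi_\Gamma$ embeds $\MzeroGtrop$ as a fan, and it remains only to upgrade ``fan'' to ``balanced fan''. For this I invoke geometric tropicalization: by Proposition~\ref{prop:GeometricTropicalizationOfMzeroGBar} and the identity $\textrm{trop}(\MzeroG)=\textrm{cone}(\textrm{Im}(\pi_\Gamma))$, the fan in question is exactly $\textrm{trop}(\MzeroG)$, which is balanced because the tropicalization of any subvariety of a torus is balanced. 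The torus embedding $\MzeroG\hookrightarrow T^{\binom{n}{2}-n-N}=T_\Gamma$ is Lemma~\ref{lem:TorusEmbeddingOfMzeroG}, and its tropicalization is $\projg(\Mzerontrop)$ by Proposition~\ref{prop:GeometricTropicalizationOfMzeroGBar}; since $\Gamma$ is complete multipartite this equals $\MzeroGtrop$ as a cone complex by \cite{fry2019tropical}, which is the second assertion.

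\textbf{Reducing the tropical compactification statement to the case of $\Mzeron$.}
Write $\Sigma=\MzeroGtrop$ with the fan structure from $\pi_\Gamma$, so $|\Sigma|=\textrm{trop}(\MzeroG)$, and let $\overline{\MzeroG}$ be the closure of $\MzeroG$ in $X(\Sigma)$; I must show $\overline{\MzeroG}=\MzeroGbar$. The plan is to transport the corresponding result for $\Mzeron$ along the projection. By Lemma~\ref{lem:ClassicalAndTropicalProjectionsMatch}, $\projg$ is the tropicalization of the torus homomorphism $\Projg\colon T^{\binom{n}{2}-n}\to T_\Gamma$, and---this is the one input I take from \cite{fry2019tropical}, and is exactly where complete multipartiteness is used---$\projg$ then carries the cones of $\Mzerontrop$ into the cones of $\Sigma$, so $\Projg$ induces a toric morphism $f\colon X(\Mzerontrop)\to X(\Sigma)$. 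Since $f$ restricts on the dense tori to $\Projg$, hence on $\Mzeron$ to the embedding of $\MzeroG$ from Lemma~\ref{lem:TorusEmbeddingOfMzeroG}, and since $f|_{\Mzeronbar}$ is proper, one obtains
$$\overline{\MzeroG}=\overline{f(\Mzeron)}=f\bigl(\overline{\Mzeron}^{X(\Mzerontrop)}\bigr)=f(\Mzeronbar),$$
where the last equality is the theorem of Tevelev and Gibney--Maclagan (\cite{tevelev2007compactifications},\cite{gibney2011equations}) that the tropical compactification of $\Mzeron$ is $\Mzeronbar$.

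\textbf{Identifying $f(\Mzeronbar)$ with $\MzeroGbar$.}
The $\Z$-extremal assignment of Equation~\eqref{eqn:ExtremalAssignmentForGammaStability} furnishes the modular contraction $c\colon\Mzeronbar\to\MzeroGbar$ collapsing precisely the $\Gamma$-unstable tails; tropically these tails are exactly the cone directions of $\Mzerontrop$ annihilated by $\projg$, so $f$ is constant on the (connected) fibers of $c$: each such fiber is proper and is mapped by $f$ into a single torus orbit of $X(\Sigma)$, hence to a point. By the rigidity lemma, $f=\phi\circ c$ for a proper birational morphism $\phi\colon\MzeroGbar\to\overline{\MzeroG}$. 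Finally I check $\phi$ is an isomorphism: $\MzeroGbar$ is smooth (Lemma~\ref{lem:SimpleNormalCrossingsMzeroGBar}), while $\MzeroG\cong\Mzeron$ is sch\"on and $\Sigma=\MzeroGtrop$ is simplicial, so $\overline{\MzeroG}$ is normal; injectivity of $\pi_\Gamma$ makes the $\Gamma$-stable dual graphs---equivalently the strata of $\MzeroGbar$---correspond bijectively to the cones of $\Sigma$ and hence to the orbit strata of $\overline{\MzeroG}$, so $\phi$ is bijective on strata and in particular quasi-finite, and Zariski's Main Theorem then forces $\phi$ to be an isomorphism. Lemma~\ref{lem:ForgetfulMorphismsGenerateUnits} enters to guarantee that the chosen torus embedding sees enough of the unit group of $\MzeroG$ that $\overline{\MzeroG}$ is a tropical compactification in the strong sense, ruling out a further contraction hiding between $\overline{\MzeroG}$ and $\MzeroGbar$.

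\textbf{Main obstacle.}
The delicate step is the final identification $f(\Mzeronbar)=\MzeroGbar$. Two things need care: that $\projg$ genuinely respects the cone structures (so that it induces a toric morphism at all---this is the only place complete multipartiteness is needed, and I would import it from \cite{fry2019tropical}), and that the bijective birational $\phi$ is an honest isomorphism rather than, say, a normalization---which is precisely what sch\"onness of $\Mzeron$, simpliciality of $\MzeroGtrop$, and the snc property of $\MzeroGbar$ (Lemma~\ref{lem:SimpleNormalCrossingsMzeroGBar}) are used to ensure.
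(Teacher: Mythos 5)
Your route is genuinely different from the paper's. The paper does not push the known result for $\Mzeron$ forward along a toric morphism; instead it verifies the Hacking--Keel--Tevelev criterion for $\MzeroGbar\rightarrow X(\MzeroGtrop)$ to be an embedding directly: (1) for every stratum $S$ and every boundary divisor $D$ through $S$, it exhibits a unit of $\O^*(\MzeroG)$ (a cross ratio, i.e.\ a forgetful morphism to $\M_{0,4}$ chosen via a careful partition of $I$ and $I^c$ using Lemma~\ref{lem:AlternateDescriptionsOfCompleteMultipartiteGraphs}) with valuation $1$ on $D$ and $0$ on the other divisors through $S$ --- and shows such units exist if and only if $\Gamma$ is complete multipartite; and (2) it shows each stratum is very affine and the restriction of units surjects onto $\O^*(S)/k^*$. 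This is where Lemma~\ref{lem:ForgetfulMorphismsGenerateUnits} is actually used, not in the auxiliary role you assign it.

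The genuine gap in your argument is the step ``$\phi$ is bijective on strata and in particular quasi-finite.'' What injectivity of $\pi_\Gamma$ gives you is a bijection between the \emph{index sets} of strata of $\MzeroGbar$ and of cones of $\Sigma$; it says nothing about $\phi$ restricted to an individual boundary stratum. A proper birational $\phi$ that matches strata bijectively could still have a positive-dimensional fiber inside some boundary stratum (the source stratum is a product of moduli spaces, the target stratum is the intersection of the closure with a torus orbit; equal dimension and surjectivity only give generic finiteness), and then ZMT does not apply. Ruling this out is exactly the content of the Hacking--Keel--Tevelev conditions the paper checks: the existence of units separating the divisors through each stratum and the surjectivity of the unit restriction to each stratum are what force each stratum of $\MzeroGbar$ to embed into its torus orbit. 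In other words, the hard part of the theorem has been relocated into an unproved assertion. A secondary, smaller gap: the existence of the toric morphism $f\colon X(\Mzerontrop)\to X(\Sigma)$ requires that $\projg$ map every cone of $\Mzerontrop$ into some cone of $\Sigma$; knowing $\projg(\Mzerontrop)=\MzeroGtrop$ as sets from \cite{fry2019tropical} is weaker than this cone-by-cone compatibility, so that too needs an argument (it is true, but you should say why: the relative interior of $\sigma_G$ maps into the relative interior of $\sigma_{G'}$ for $G'$ the $\Gamma$-stabilization of $G$).
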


% The geometric tropicalization of $\Mzeron$ via the embedding
% $$\Mzeron\hookrightarrow T^{\binom{n}{2}-n}$$ 
% gives the fan $\textrm{trop}(\Mzeron)$ whose underlying cone complex is identified with $\Mzerontrop$. Furthermore, the tropical compactification of $\Mzeron$ in the toric variety $X(\Mzerontrop)$ is $\Mzeronbar$.\\

% Consider the embedding
% $$\MzeroG\hookrightarrow T^{\binom{n}{2}-n-N}=T_\Gamma $$
% described `earlier'. The closure of $\MzeroG$ in the compactification of $T_\Gamma$ defined by the fan $\MzeroGtrop$ is isomorphic to $\MzeroGbar$ if and only if $\Gamma$ is complete multipartite. The tropicalization of $\MzeroG$ with respect to this embedding is $\projg(\Mzerontrop)=\MzeroGtrop$.\\

% The cone complex $\MzeroGtrop$ can be embedded as a balanced fan in a real vector space if and only if $\Gamma$ is a complete multipartite graph.
% For such $\Gamma$, there is a torus embedding 
% $$\MzeroG\hookrightarrow T^{\binom{n}{2}-n-N}=T_\Gamma $$
% whose tropicalization $\Sigma_\Gamma$ has underlying cone complex $\MzeroGtrop$. 
% The closure of $\MzeroG$ in the toric variety $X(\Sigma_\Gamma)$ with torus $T_\Gamma$ is the compactification $\MzeroGbar$.

\begin{proof}
As in \cite{cavalieri2016moduli}, we wish to show the map $\MzeroGbar\rightarrow X(\MzeroGtrop)$ is an embedding. 
According to \cite[Lemma 2.6 (4) and Theorem 2.10]{hacking2009stable}, this occurs when the following two conditions hold. 
For a stratum $S$, let $\M_S$ be $\O^*(S)/{k^*}$ and $\M_{\MzeroG}^S$ be the sublattice of $\O^*(\MzeroG)/{k^*}$ generated by units having zero valuation on $S$.
\begin{enumerate}
    \item \label{itm: Main Theorem 1} For each boundary divisor $D$ containing $S$, there is a unit $u\in\O^*(\MzeroG)$ with valuation 1 on $D$ and valuation 0 on other boundary divisors containing $S$.
    \item \label{itm: Main Theorem 2} $S$ is very affine and the restriction map $\M_{\MzeroG}^S\rightarrow\M_S$ is surjective.
\end{enumerate}
We note that condition (\ref{itm: Main Theorem 1}) occurs if and only if $\Gamma$ is a complete multipartite graph, but condition (\ref{itm: Main Theorem 2}) does not force $\Gamma$ to be complete multipartite.\\

For condition (\ref{itm: Main Theorem 1}), recall that the general element of a boundary divisor $D_I$ has exactly one node and may be described by $I$, the set of marked points on a component. 
Observe that units in $\O^*(\MzeroG)$ are generated by forgetful morphisms to $\M_{0,4}$ using \emph{cross ratios} as in \cite[Section 5]{tevelev2007compactifications}. 
Such a forgetful morphism has valuation 1 on $D$ if the image of the general element of $D$ is nodal and valuation 0 on $D$ if the image of the general element of $D$ is smooth.
We show forgetful morphism with that property exists if and only if $\Gamma$ is a complete multipartite graph.

We prove the forwards direction by way of contradiction. Assume $\Gamma$ is not complete multipartite.
Using Lemma~\ref{lem:AlternateDescriptionsOfCompleteMultipartiteGraphs}, fix three vertices $v_i, v_j,$ and $v_k$ where $e_{ij}\in E(\Gamma)$ but $e_{ik},e_{jk}\not\in E(\Gamma)$. 
Consider the divisors $D_{\{i,j,k\}}$ and $D_{\{i,j\}}$ whose intersection yields the stratum $S$ whose dual graph is shown in Figure~\ref{fig:MainTheoremCounterExampleStratum}. 
Every forgetful morphism that has valuation 1 on $D_{\{i,j,k\}}$ must not forget $i$ and $j$, otherwise, the image of the general element of $D_{\{i,j,k\}}$ is smooth.
However, any such morphism also has valuation 1 on $D_{\{i,j\}}$, a contradiction.

\begin{figure}[h]
    \centering
    
\begin{tikzpicture}[scale=1.3]
%first node
\fill (0,0) circle (0.10);
% \node at (0.2,-0.3) {\cred $v_1$};
\draw[very thick] (0,0) -- (-0.5,0.5); 
\node at (-0.7,0.5) {};
\draw[very thick] (0,0) -- (-0.5,0.17); 
\node at (-0.7,0.17) {};
\draw[very thick] (0,0) -- (-0.5,-0.17); 
\node at (-0.7,-0.17) {};
\draw[very thick] (0,0) -- (-0.5,-0.5); 
\node at (-0.7,-0.5) {};

\draw[very thick] (0,0) -- (1.5,0);
% \node at (0.75,0.3) {\cb $\ell_1$};

%second node
\fill (1.5,0) circle (0.10);
% \node at (1.5,-0.3) {\cred $v_0$};
\draw[very thick] (1.5,0) -- (1.5,0.5);
\node at (1.5,0.7) {$k$};

\draw[very thick] (1.5,0) -- (3,0);
% \node at (2.25,0.3) {\cb $\ell_2$};

%third node
\fill (3,0) circle (0.10);
% \node at (2.8,-0.3) {\cred $v_2$};
\draw[very thick] (3,0) -- (3.5,0.5); 
\node at (3.7,0.5) {$i$};
\draw[very thick] (3,0) -- (3.5,-0.5); 
\node at (3.7,-0.5) {$j$};
\end{tikzpicture}
    \caption{Dual graph of the stratum $S$ contained in $D_{\{i,j\}}$ and $D_{\{i,j,k\}}$.}
    \label{fig:MainTheoremCounterExampleStratum}
\end{figure}
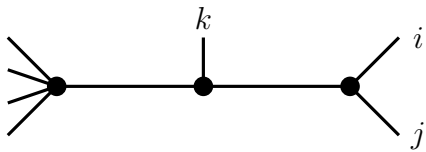

Now suppose $\Gamma$ is complete multipartite. Fix a stratum $S$ and a divisor $D_I$ containing $S$, as shown in Figure~\ref{fig:MainTheoremStratumDivisor}. 
Our goal is to find four markings, $\{a,b,c,d\}$, such that the general element of $D_I$ remains nodal and the general element of all other divisors containing $S$ become smooth in $\M_{0,4}$ on $\{a,b,c,d\}$.
We proceed by fixing $d=1$ so that $a,b,c$ correspond to vertices in $\Gamma$ and without loss of generality, let $a,b\in I$ and $c\in I^c$.
To ensure the image of the general element of $D_I$ remains nodal, we must pick $a,b$ such that $e_{ab}\in E(\Gamma)$.
Consider the components $Z$ and $Z^c$ of $S$ and $\overline{Z}$ and $\overline{Z}^c$ of $D_I$ that share a node, as illustrated by their dual graphs in Figure~\ref{fig:MainTheoremStratumDivisor}.

% \note{Note that a forgetful morphism $u$ which remembers the markings $\{1,a,b,c\}$ has valuation 1 on the general element of $D_I$ if and only if (without loss of generality $I\cap\{1,a,b,c\}=\{a,b\}$) $e_{ab}\in E(\Gamma)$, and valuation 0 otherwise. So, we must pick points $a,b\in I$ such that $e_{ab}\in E(\Gamma)$ and for any other divisor $D_J$ containing $S$, $|J\cap\{1,a,b,c\}|\not=2$.}

\begin{figure}[h]
    \centering
    
    \begin{tikzpicture}[scale=1.3]
%first node
\fill (0,0) circle (0.10);
\node at (0.2,-0.3) {$Z^c$};
% \node at (0.2,-0.3) {\cred $v_1$};
\draw[very thick, dashed] (0,0) -- (-0.5,0.5); 
% \node at (-0.7,0.5) {};
\draw[very thick, dashed] (0,0) -- (-0.5,0.17); 
% \node at (-0.7,0.17) {};
\node at (-0.8,0) {$I^c$};
\draw[very thick] (0,0) -- (-0.5,-0.17); 
% \node at (-0.7,-0.17) {};
\draw[very thick] (0,0) -- (-0.5,-0.5); 
% \node at (-0.7,-0.5) {};

\draw[very thick] (0,0) -- (2,0);
% \node at (0.75,0.3) {\cb $\ell_1$};

%second node
\fill (2,0) circle (0.10);
\node at (1.8,-0.3) {$Z$};
\draw[very thick, dashed] (2,0) -- (2.5,0.5); 
% \node at (2.7,0.5) {};
\draw[very thick, dashed] (2,0) -- (2.5,0.17); 
% \node at (2.7,0.17) {};
\node at (2.8,0) {$I$};
\draw[very thick] (2,0) -- (2.5,-0.17); 
% \node at (2.7,-0.17) {};
\draw[very thick] (2,0) -- (2.5,-0.5); 
% \node at (2.7,-0.5) {};

\begin{scope}[shift={(5,0)}]
%first node
\fill (0,0) circle (0.10);
\node at (0.2,-0.3) {$\overline{Z}^c$};
% \node at (0.2,-0.3) {\cred $v_1$};
\draw[very thick] (0,0) -- (-0.5,0.5); 
% \node at (-0.7,0.5) {};
\draw[very thick] (0,0) -- (-0.5,0.17); 
% \node at (-0.7,0.17) {};
\node at (-0.8,0) {$I^c$};
\draw[very thick] (0,0) -- (-0.5,-0.17); 
% \node at (-0.7,-0.17) {};
\draw[very thick] (0,0) -- (-0.5,-0.5); 
% \node at (-0.7,-0.5) {};

\draw[very thick] (0,0) -- (2,0);
% \node at (0.75,0.3) {\cb $\ell_1$};

%second node
\fill (2,0) circle (0.10);
\node at (1.8,-0.3) {$\overline{Z}$};
\draw[very thick] (2,0) -- (2.5,0.5); 
% \node at (2.7,0.5) {};
\draw[very thick] (2,0) -- (2.5,0.17); 
% \node at (2.7,0.17) {};
\node at (2.8,0) {$I$};
\draw[very thick] (2,0) -- (2.5,-0.17); 
% \node at (2.7,-0.17) {};
\draw[very thick] (2,0) -- (2.5,-0.5); 
% \node at (2.7,-0.5) {};
\end{scope}
\end{tikzpicture}
    
    \caption{Dual graphs of the stratum $S$ and divisor $D_{I}$ from the proof of Theorem~\ref{thm:MainMainTheorem} where dashed edges represent potential extra components.}
    \label{fig:MainTheoremStratumDivisor}
\end{figure}
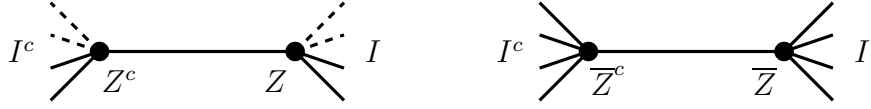

% We may pick three markings $a,b,c$ such that $I\cap\{1,a,b,c\}=2$. We may also pick them such that for any other divisor $D_J$ containing $S$ we have $J\cap\{1,a,b,c\}\not=2$. This ensures that a forgetful morphism $u$ which keeps $1,a,b,c$ has valuation 0 on any divisor not equal to $D_I$ containing $S$. Without loss of generality, assume $a,b\in I$. If $e_{a,b}\in E(\Gamma)$, then the image of $D_I$ under a forgetful morphism keeping $\{1,a,b,c\}$ is nodal and we are done. 

We partition $I$ and $I^c$ in the following way.
Split $S$ into connected components by separating $S$ at the nodes of $Z$ and $Z^c$.
Now, $S$ has been deconstructed into several connected components: $Z$ with its markings, $Z^c$ with its markings, and a connected component for each other node on $Z$ and $Z^c$.
Let $\lambda_I$ be the partition of $I$ given by the markings on $Z$ and the components previously attached to $Z$ and let $\lambda_{I^c}$ be the partition of $I^c$ given by the markings on $Z^c$ and the components previously attached to $Z^c$.

If $Z$ has only one node, then $\lambda_I=I$ and we need only choose $a,b\in I$ so that $e_{ab}\in E(\Gamma)$. 
Similarly, if $Z^c$ has only one node, then $\lambda_{I^c}=I^c$ and we need only choose $c\in I^c\setminus\{1\}$.
Suppose $Z$ and $Z^c$ both have more than one node.
Fix parts $A\in\lambda_I$ and $C\in\lambda_{I^c}$ so that $1\not\in C$, and let $B=I\setminus A$.
By $\Gamma$-stability, there is an edge $e_{a_1a_2}\in E(\Gamma)$ for markings $a_1,a_2\in A$. Lemma~\ref{lem:AlternateDescriptionsOfCompleteMultipartiteGraphs} says that for a marking $b\in B$, either $e_{a_1b}$ or $e_{a_2b}$ is in $E(\Gamma)$.
Fix $a\in A$ and $b\in B$ so that $e_{ab}\in E(\Gamma)$ and fix $c\in C$.

Indeed, in each of the above cases the forgetful morphism which keeps $\{1,a,b,c\}$ has valuation $1$ on $D_I$ since $e_{ab}\in E(\Gamma)$.
Let $D_J$ be any other divisor containing $S$. 
From our choices of $a,b,c$, we know $J\cap\{1,a,b,c\}\neq2$.
This means that the image of the general element $D_J$ under the forgetful morphism which keeps $\{1,a,b,c\}$ will be smooth, and thus $D_J$ has valuation $0$.

For condition (\ref{itm: Main Theorem 2}), a stratum $S$ is very affine because it can be viewed as a product of $\M_{0,\Gamma'}$s.
Each component of $S$ contains at least one node which acts as the `special' marking $1$; the marked points behave under $\Gamma'$-stability, since any subgraph of a complete multipartite graph is complete multipartite; and any extra node serves as a marked point whose vertex in $\Gamma'$ is connected to all other vertices, which keeps $\Gamma'$ as a complete multipartite graph since a fully connected vertex serves as its own independent set. 
Finally, since the boundary of $\MzeroGbar$ is a simple normal crossings divisor, as in the case of $\Mzerowbar$, the surjectivity of the restriction map follows the same proof outline as in \cite{cavalieri2016moduli}.
The local structure of $\partial\MzeroGbar$ is an intersection of coordinate hyperplanes and restricting the coordinates is surjective.
\end{proof}

% \begin{rem}\label{rem:GeoTropAndTropCompStillTrue}
Many statements remain true when $\Gamma$ is not complete multipartite. 
The geometric tropicalization of $\MzeroGbar$ using the embedding in Lemma~\ref{lem:TorusEmbeddingOfMzeroG} still equals $\projg(\MzeroGtrop)=\textrm{trop}(\MzeroG)$.
However, not all cones are mapped injectively. 
On the algebraic side, we still have a map from $\MzeroGbar$ to the toric variety $X(\textrm{trop}(\MzeroG))$, but it does not map all boundary strata injectively. 
Example~\ref{exam:ObstructionOfFansOnK4Minus2AdjacentEdgesAlgebroGeometry} highlights this observation.
% \end{rem}

\begin{exam}\label{exam:ObstructionOfFansOnK4Minus2AdjacentEdgesAlgebroGeometry}
Let $\Gammatilde$ be the subgraph of $K_4$ with edges $e_{35}$ and $e_{45}$ removed as in Example~\ref{exam:ObstructionExampleDivisorialValuation}; see Figure~\ref{fig:GammaForExampleOfProjgFailingInjectivity}.
Then Figure~\ref{fig:SliceOfMzeroGammaTropObstruction} depicts the boundary of $\MzeroGtildebar$ and a slice of $\MzeroGtildetrop$ while Figure~\ref{fig:SliceOfTropOfMzeroGammaObstruction} depicts the boundary of the closure of $\MzeroGtilde$ in $X(\textrm{trop}(\MzeroGtilde))$ and a slice of $\textrm{trop}(\MzeroGtilde)$.

There are only 8 2D cones and 7 rays in $\textrm{trop}(\MzeroGtilde)$ while the cone over $\partial\MzeroGtildebar$ has 9 2D cones and 8 rays.
This means $X(\textrm{trop}(\MzeroGtilde))$ isn't large enough to contain $\MzeroGtildebar$.
In other words, the locus of smooth curves $\MzeroGtilde$ is missing the limit as the marked points 3, 4, and 5 collide.
The modular compactification of $\MzeroGtilde$ assigns a $\PP^1$ to the limit but $X(\textrm{trop}(\MzeroGtilde))$ doesn't have enough coordinates to include a $\PP^1$.
Rather, this limit gets closed with a single point in $X(\textrm{trop}(\MzeroGtilde))$ (which is the intersection of two smooth curves where the marked points 3 and 4, and 4 and 5, have collided).

\end{exam}

\begin{rem}
% A complete multipartite graph gives the same combinatorial data as a partition on the set $\{1\ldots,n-1\}$.
% We may define stability of algebraic and tropical curves strictly in terms of a partition of the set of markings.
% ``tropicalization statement 
% Recently, Blankers and Bozlee \cite{blankers2022compactifications} find that for any modular compactification of $\M_{g,n}$, collisions of markings are controlled by a simplicial complex.
% simplicial stability
% This notion of stability can be viewed as a combinatorially dual notion to graphic stability:
% an \textbf{independent set} of a graph is a set of vertices in a graph, no two of which are adjacent.
% The \textbf{independence complex} of a graph $\Gamma$ is the simplicial complex formed by the sets of vertices in the independent sets of $\Gamma$.
% The independence complex of a complete multipartite graph $\Gamma$ yields the same modular compactification under the definitions of Blankers and Bozlee as $\Gamma$ would under graphic stability.
% If $\Gamma$ is not complete multipartite, there is a disconnect between the two concepts.
% EXTRA POINT

Graphical stability can be viewed as a special case of the simplicial stability described by Blankers and Bozlee \cite{blankers2022compactifications} as follows. 
An \emph{independent set} of a graph is a set of vertices in a graph, no two of which are adjacent. 
The \emph{independence complex} of a graph is a simplicial complex formed by the sets of vertices in the independent sets of the graph. 
Let $\Gamma$ be a graph on $n-1$ vertices, and let $\Gamma'$ be the graph defined by adding an $n$th vertex to $\Gamma$ and edges connecting the new vertex to all other vertices. 
Then the Blankers--Bozlee simplicial compactification of $\M_{g,n}$ given by the incidence complex of $\Gamma'$ agrees with the graphical stability compactification of $\M_{g,n}$ associated to $\Gamma$.
\end{rem}

{
\bibliographystyle{plain}
\bibliography{References.bib}
}

\end{document}